\definecolor{mylinkcolor}{rgb}{0.8,0,0}
\definecolor{myurlcolor}{rgb}{0,0,0.8}
\definecolor{mycitecolor}{rgb}{0,0,0.8}
\newtheorem{defn}{Definition}[section]
\newtheorem{definition}[defn]{Definition}
\newtheorem{lemma}[defn]{Lemma}
\newtheorem{thm}[defn]{Theorem}
\newtheorem{theorem}[defn]{Theorem}
\newtheorem{prop}[defn]{Proposition}
\theoremstyle{definition}
\newtheorem{remark}[defn]{Remark}
\newtheorem{question}[defn]{Question}
\newcommand{\C}{\mathbb C}
\newcommand{\QQ}{\mathbb Q}
\newcommand{\Q}{\mathbb Q}
\newcommand{\ZZ}{\mathbb Z}
\newcommand{\Z}{\mathbb Z}
\newcommand{\PP}{\mathbb P}
\newcommand{\SL}{\operatorname{SL}}
\newcommand{\PSL}{\operatorname{PSL}}
\newcommand{\Gal}{\operatorname{Gal}}
\newcommand{\Aut}{\operatorname{Aut}}
\newcommand{\GL}{\operatorname{GL}}
\renewcommand{\Im}{\operatorname{Im}}
\newcommand{\tor}{\mathrm{tors}}
\begin{document}



\bibliographystyle{plain}
\title{Minimal Subgroups of $\GL_2(\ZZ_{S})$}

\author{Harris B. Daniels}
\address{Department of Mathematics and Statistics, Amherst College, Amherst, MA 01002, USA}
\email{hdaniels@amherst.edu}
\urladdr{http://www3.amherst.edu/~hdaniels/}

\author{Jeremy Rouse}
\address{Department of Mathematics, Wake Forest University, Winston-Salem, NC 27109, USA}
\email{rouseja@wfu.edu}
\urladdr{https://users.wfu.edu/rouseja/}
{}

\keywords{Elliptic Curves,  Galois Representations, Profinite Groups.}

\subjclass[2020]{Primary: 11G05, Secondary: 11F80,  14H52, 22E50.}

\begin{abstract}
Let $E$ be an elliptic curve over a number field $L$ and for a finite set $S$ of primes, let
$\rho_{E,S} : \Gal(\overline{L}/L) \to \GL_{2}(\ZZ_{S})$ be the $S$-adic Galois representation. If $L \cap \Q(\zeta_{n}) = \Q$ for all positive integers $n$ whose prime factors are in $S$, then $\det \rho_{E,S} : \Gal(\overline{L}/L) \to \ZZ_{S}^{\times}$ is surjective. We say that a finite index subgroup $H \subseteq \GL_{2}(\ZZ_{S})$ is \emph{minimal} if $\det : H \to \ZZ_{S}^{\times}$ is surjective, but $\det : K \to \ZZ_{S}^{\times}$ is not surjective for any proper closed subgroup $K$ of $H$. We show that there are no minimal subgroups of $\GL_{2}(\ZZ_{S})$ unless $S = \{ 2 \}$, while minimal subgroups of $\GL_{2}(\ZZ_{2})$ are plentiful. We give models for all the genus $0$ modular curves associated to minimal subgroups of $\GL_{2}(\ZZ_{2})$, and construct an infinite family of elliptic curves over imaginary quadratic fields with bad reduction only at $2$ and with minimal $2$-adic image.
\end{abstract}
\maketitle

\section{Introduction}\label{sec:intro}

Given an elliptic curve $E/\QQ$, a prime $p$, and a fixed algebraic closure of $\overline\QQ$, one can construct the $p$-adic Galois representation associated to $E$, \[
\rho_{E,p^\infty}:\Gal(\overline\QQ/\QQ)\to\GL_2(\ZZ_p).
\]
Recently there has been great interest in studying the groups of the form $G_{E,p} = \Im(\rho_{E,p^\infty})$. 
A few classical results about $G_{E,p}$ include that $\det(G_{E,p}) = \ZZ_p^\times$ and if $E$ does not have complex multiplication, then $[\GL_2(\ZZ_p):G_{E,p}]$ is finite (see Chapter IV of \cite{Serre68}). 

Given these two things, a natural question to ask is if there is an elliptic curve $E/\QQ$ and prime $p$ such that $G_{E,p}$ has the property that if $H\subsetneq G_{E,p}$, then $\det(H)\subsetneq \ZZ_p^\times$? Before answering this question, we phrase it more broadly.

Let $S$ be a finite set of primes. We also let \[\ZZ_S = \lim_{\leftarrow} \ZZ/n\ZZ\] with respect to divisibility, but restricting $n$ to be divisible only by primes in $S$. 
In this case we have that
\[
\GL_2(\ZZ_S) = \prod_{p\in S}\GL_2(\ZZ_p). 
\]

\begin{defn}\label{def:min}
	A group $H\subseteq \GL_2(\ZZ_S)$ of finite index is called {\bfseries minimal} if $\det(H) = \ZZ_S^\times$, but for every maximal closed subgroup $M\subsetneq H$ we have that $\det(M)\subsetneq \ZZ_S^\times.$
\end{defn}

So a more general question would be if there is an elliptic curve $E/\QQ$ and a set of primes $S \subseteq \ZZ$ such that the image of the $S$-adic Galois representation $\Im\rho_{E,S} = G_{E,S} \subseteq \prod_{p\in S}G_{E,p}\subseteq \GL_2(\ZZ_S)$ is minimal and if so, can we classify all of, or maybe almost all of, such curves?

In order to start examining this question, we start by looking for examples. 

\subsection{First Examples}\label{sub:1Ex}
We claim that if $E/\QQ$ is an elliptic curve that does not have complex multiplication and $E$ only has bad reduction at 2, then $G_{E,2}$ is a minimal group. 

Suppose that $E/\QQ$ is an elliptic curve without complex multiplication that only has bad reduction at 2. 
From the criterion of N\'eron-Ogg-Shafarevich \cite{SerreTateNOS} we know that the extension $\QQ(E[2^\infty])/\QQ$ is only ramified at 2. 
From the main theorem of \cite{Tate94} we know that there are no such cubic extensions of $\QQ$. 
This, together with the fact that $\GL_2(\ZZ/2\ZZ)\simeq S_3$ would force $E$ to have a point of order 2 defined over $\QQ$. 
Besides being able to conclude that $E$ has to have a rational point of order 2, this allows us to conclude that $G_{E,2}$ is in fact a pro 2-group. 
This is because once we know that $E$ has a rational point of order 2, we know that $[\QQ(E[2]):\QQ]=1$ or $2$ and $[\QQ(E[2^{n+1}]) : \QQ(E[2^n])]$ is always a power of 2.
Now that we know that $G_{E,2}$ is a pro 2-group, we can use a classical result in group theory that says that any proper maximal subgroup of a pro $p$-group is normal and has index $p$. (Here and elsewhere, we require a maximal subgroup of a profinite group to be closed.)

Thus, in our case, the maximal proper subgroups of $G_{E,2}$ all have index 2 and are normal. But the Galois correspondence says that these maximal subgroups correspond to quadratic subfields of $\QQ(E[2^\infty]).$
Again, from the main theorem of \cite{Tate94} the only possible quadratic subfields of $\QQ(E[2^\infty])$ are subfields of $\QQ(\zeta_8)=\QQ(i,\sqrt{2})$. 
From this we know that every maximal subgroup of $G_{E,2}$ fixes one of these fields. Lastly, using the Weil pairing we see that none of the maximal subgroups of $G_{E,2}$ can have surjective determinant. 

Searching \cite{lmfdb} we find that there are 8 elliptic curves defined over $\QQ$ without complex multiplication that only have bad reduction at 2. 
These elliptic curves have labels 
\begin{center}
\href{https://lmfdb.org/EllipticCurve/Q/128/a/1}{\texttt{128.a1}}, 
\href{https://lmfdb.org/EllipticCurve/Q/128/a/2}{\texttt{128.a2}}, 
\href{https://lmfdb.org/EllipticCurve/Q/128/a/3}{\texttt{128.a3}}, 
\href{https://lmfdb.org/EllipticCurve/Q/128/a/4}{\texttt{128.a4}}, 
\href{https://lmfdb.org/EllipticCurve/Q/128/b/1}{\texttt{128.b1}}, 
\href{https://lmfdb.org/EllipticCurve/Q/128/b/2}{\texttt{128.b2}}, 
\href{https://lmfdb.org/EllipticCurve/Q/128/b/3}{\texttt{128.b3}} and 
\href{https://lmfdb.org/EllipticCurve/Q/128/b/4}{\texttt{128.b4}}.
\end{center}
\begin{remark}
We know that these are in fact all of the elliptic curves over $\QQ$ with bad reduction only at 2. This is because any elliptic curve with only bad reduction at two has conductor equal to a power of 2, but there is a bound on the exponent that can appear on 2 from \cite[Theorem 10.2]{Silv2}. Thus any elliptic curve over $\QQ$ with bad reduction only at 2 has conductor bounded by 256. So the completeness of the data in \cite{lmfdb} allows us to conclude that these are all of them. 
\end{remark}

Examining these curves further in the LMFDB, we see that they all have different $2$-adic images and the modular curves associated to each of these 8 different subgroups of $\GL_2(\ZZ_2)$ are genus 0. 
Using the labels conventions established in \cite{RSZB}, these groups are 
\begin{center}
\texttt{32.96.0.1}, 
\texttt{32.96.0.3}, 
\texttt{32.96.0.25}, 
\texttt{32.96.0.27},
\texttt{32.96.0.102}, 
\texttt{32.96.0.104}, 
\texttt{32.96.0.106}, and 
\texttt{32.96.0.108}. 
\end{center}
The models associated to each of the corresponding modular curves have been computed in \cite{RZB}\footnote{In this paper the curves are labeled \texttt{X238a-d} and \texttt{X239a-d}}.
So, our pure thought argument that had initially led to 8 examples has in fact led us to 8 separate infinite families of elliptic curves whose $2$-adic images are minimal. Thus it seems that this phenomenon might be quite common.

With these examples in hand, we move on to trying to better understand this phenomenon abstractly. 

\subsection{Statement of results and outline}

Our first main result is that minimal groups are a $2$-adic phenomena.
\begin{thm}\label{thm:S=2}
Let $S$ be a finite nonempty set of primes and $H\subseteq \GL_2(\ZZ_S)$ a minimal group of finite level. Then $S = \{2\}$.
\end{thm}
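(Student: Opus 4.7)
My plan is to reformulate minimality as a Frattini-subgroup condition, use that to bound the number of topological generators of $H$, and then derive contradictions. First I would observe that for a maximal open subgroup $M \subsetneq H$, one has $\det(M) = \ZZ_S^\times$ if and only if $M \cdot (H \cap \SL_2(\ZZ_S)) = H$, which by maximality of $M$ is equivalent to $H \cap \SL_2(\ZZ_S) \not\subseteq M$. Writing $K := H \cap \SL_2(\ZZ_S)$, this means $H$ is minimal precisely when $K$ lies in every maximal open subgroup, i.e., when $K \subseteq \Phi(H)$, the Frattini subgroup. Consequently $H/\Phi(H)$ is an abelian quotient of $H/K \cong \ZZ_S^\times$, so by the standard Frattini-lifting property for profinite groups the minimum number of topological generators satisfies $d(H) = d(H/\Phi(H)) \leq d(\ZZ_S^\times)$.

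Using $\ZZ_p^\times \cong \ZZ/(p-1)\ZZ \times \ZZ_p$ for odd $p$ and $\ZZ_2^\times \cong \ZZ/2\ZZ \times \ZZ_2$, an $\FF_\ell$-rank computation shows that $d(\ZZ_S^\times) = 1$ exactly when $S = \{p\}$ with $p$ odd. In that situation, minimality would force $H$ to be topologically procyclic, which is impossible: the images $\bar H_n \subseteq \GL_2(\ZZ/p^n\ZZ)$ of the open subgroup $H$ under reduction mod $p^n$ have order growing like $p^{4n}$, while the largest cyclic subgroup of $\GL_2(\ZZ/p^n\ZZ)$ (a non-split Cartan) has order only $p^{n-1}(p^2-1)$, so for $n$ large enough no such $\bar H_n$ can be cyclic, contradicting procyclicity of $H$.

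For the case $|S|\geq 2$, I would pick an odd prime $p\in S$ (possible since $S \neq \{2\}$), set $H_p := \operatorname{pr}_p(H) \subseteq \GL_2(\ZZ_p)$, and apply the previous paragraph's argument to $H_p$ to deduce that $H_p$ is not minimal. Thus there is a maximal open subgroup $M_p \subsetneq H_p$ with $\det_p(M_p) = \ZZ_p^\times$, and pulling back gives a maximal open subgroup $M := \operatorname{pr}_p^{-1}(M_p) \cap H$ of $H$ (the verification that $M$ is maximal in $H$ follows from the maximality of $M_p$ in $H_p$), whose $p$-component of determinant is $\ZZ_p^\times$. The hard part will be to show that the components of $\det(M)$ at primes $q \neq p$ also fill out $\ZZ_q^\times$; a fiber-product description of $H$ inside $\prod_{q\in S} H_q$ reduces this to choosing $M_p$ so that the normal subgroup $N_p \subseteq H_p$ governing the fibering at the other primes is not contained in $M_p$. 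Verifying that such a compatible maximal $M_p$---not merely some maximal $M_p$ with surjective $p$-determinant in isolation---can always be found is the technical heart of the $|S|\geq 2$ case; once it is settled, $M$ yields a maximal open subgroup of $H$ with $\det(M) = \ZZ_S^\times$, contradicting minimality.
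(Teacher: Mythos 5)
Your Frattini reformulation is correct and the case $S=\{p\}$ with $p$ odd is fully handled: minimality is indeed equivalent to $H\cap\SL_2(\ZZ_S)\subseteq\Phi(H)$, so $d(H)\le d(\ZZ_S^\times)=1$, and an open subgroup of $\GL_2(\ZZ_p)$ cannot be procyclic since its image mod $p^n$ has order $\gg p^{4n}$ while any cyclic subgroup of $\GL_2(\ZZ/p^n\ZZ)$ has order $O(p^{n+1})$. This is a genuinely different route from the paper, which never counts generators: there, minimality forces every maximal subgroup to be normal (each is a $\det$-preimage), hence every finite quotient $\pi_{p^k}(H)$ is nilpotent, and a commutation argument with the matrices $I+p^{k-1}Z$ shows that \emph{every} element of $H$ has determinant that is a square mod $p$, directly contradicting surjectivity.

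However, your argument for $|S|\ge 2$ has a genuine gap, which you yourself flag as ``the technical heart.'' The pullback $M=\operatorname{pr}_p^{-1}(M_p)\cap H$ is indeed a maximal open subgroup of $H$ whose determinant surjects onto the $\ZZ_p^\times$ factor, but that is not enough to contradict minimality: $\det(M)$ can still be a proper subgroup of $\ZZ_S^\times$ (its projection to some $\ZZ_q^\times$ with $q\ne p$ may be proper, or $\det(M)$ may be a Goursat-type graph subgroup with all projections surjective). Generator counting cannot close this, since $d(\ZZ_S^\times)\ge 2$ once $|S|\ge 2$, so no procyclicity contradiction is available; and the natural attempt to transport the Frattini condition to $H_p$ fails because $\operatorname{pr}_p(H\cap\SL_2(\ZZ_S))$ is in general strictly smaller than $H_p\cap\SL_2(\ZZ_p)$ (a preimage of an element of $H_p\cap\SL_2(\ZZ_p)$ need not have trivial determinant at the other primes). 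So as written you have not produced any maximal subgroup of $H$ with fully surjective determinant when $|S|\ge 2$, and the theorem is not proved in that case. The cleanest repair is to abandon the search for such a subgroup and instead argue, as the paper does, that minimality constrains the determinants of \emph{all} elements of $H$ at the odd prime $p$ (squares mod $p$), which is a purely local statement at $p$ and is insensitive to what happens at the other primes in $S$.
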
 
This, together with {}the complete classification of possible 2-adic images available in \cite{RZB} completes the classification of elliptic curves defined over $\QQ$ with minimal image. 

On the other hand, minimal subgroups of $\GL_{2}(\Z_{2})$ are plentiful.
\begin{thm}\label{thm:lots}
Let $H \leq \GL_{2}(\Z_{2})$ be any finite-index subgroup with $\det(H) = \Z_{2}^{\times}$. Then there is
a minimal subgroup $M \leq H$ with $|H : M| < \infty$. 
\end{thm}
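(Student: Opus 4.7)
The plan is to prove Theorem \ref{thm:lots} in three stages.

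First, reduce to the case where $H$ is pro-$2$. Since $\GL_2(\F_2) \cong S_3$, the group $\GL_2(\Z_2)$ is pro-$\{2,3\}$ and has a pro-$2$ Sylow subgroup of index $3$. Let $P$ be a pro-$2$ Sylow of $H$; then $[H:P]$ divides $3$. Because $\Z_2^\times$ is itself pro-$2$, the image of $\det|_P$ is a pro-$2$ Sylow of $\det(H) = \Z_2^\times$, hence equals $\Z_2^\times$. It suffices to find a minimal $M \leq P$ of finite index. As a byproduct, every minimal subgroup of $\GL_2(\Z_2)$ is already pro-$2$: a maximal closed subgroup of index $3$ would automatically have surjective determinant (since $\Z_2^\times$ has no closed subgroup of index $3$), contradicting minimality.

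Second, express minimality of a pro-$2$ subgroup as a condition on its Frattini quotient. One computes $\Z_2^\times/\Phi(\Z_2^\times) \cong (\Z/8\Z)^\times \cong \F_2^2$, and the determinant induces $\overline{\det}: M/\Phi(M) \to \F_2^2$. In a topologically finitely generated pro-$2$ group, every proper maximal closed subgroup has index $2$ and corresponds to a codimension-one $\F_2$-subspace of $M/\Phi(M) \cong \F_2^{d(M)}$; the maximal subgroup attached to a hyperplane $V$ has surjective determinant if and only if $V$ does not contain $\ker \overline{\det}$. Hence $M$ is minimal iff every hyperplane of $M/\Phi(M)$ contains $\ker \overline{\det}$, iff $\ker \overline{\det} = 0$, iff $d(M) = 2$ and $\det M = \Z_2^\times$. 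The task becomes: find an open, topologically $2$-generated subgroup of $P$ with surjective determinant.

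Third, construct such an $M$ inside $P$. Choose $\sigma_1, \sigma_2 \in \Z_2^\times$ whose images form an $\F_2$-basis of $(\Z/8\Z)^\times$. The condition $\det(x_i) \equiv \sigma_i \pmod 8$ defines a non-empty $2$-adic open coset condition on $(x_1, x_2) \in P \times P$ (non-empty because $\det P = \Z_2^\times$). By standard $p$-adic Lie theory (or a Pink-type classification of closed subgroups of $\GL_2(\Z_2)$), the condition that $\overline{\langle x_1, x_2\rangle}$ be open in $\GL_2(\Z_2)$ is equivalent to $\{x_1, x_2\}$ not being contained in any proper algebraic subgroup of $\GL_2$ (Borels, Cartan normalizers, and the exceptional subgroups at $p=2$); this cuts out a Zariski-open subset $U$ of $\GL_2(\Q_2)^2$, non-empty by the explicit examples of Section \ref{sub:1Ex}. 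Hence $U$ is Zariski-dense and therefore $2$-adically dense, so its intersection with $P \times P$ is $2$-adically dense in $P \times P$ and meets the determinant-coset condition. For any pair $(x_1, x_2)$ in the intersection, Burnside's basis theorem forces $\det M = \Z_2^\times$, and $M$ is an open pro-$2$ subgroup of $\GL_2(\Z_2)$ topologically generated by $2$ elements. By Step~2, $M$ is minimal and $[H:M] < \infty$.

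The principal difficulty lies in Step~3: one must simultaneously control the determinant cosets of $x_1, x_2$ and verify that the closed subgroup they generate is open in $\GL_2(\Z_2)$. The latter relies on the classification of closed subgroups of $\GL_2(\Z_2)$, which at $p = 2$ has additional exceptional cases due to the smallness of $\GL_2(\F_2)$; combining this with the determinant-coset constraint requires care.
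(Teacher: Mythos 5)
Your Steps 1 and 2 are correct, and they actually treat the ``minimality'' half of the argument more carefully than the paper's own proof does. Passing to a pro-$2$ Sylow subgroup $P$ of $H$ (whose determinant is still all of $\Z_2^\times$ because $\Z_2^\times$ has no closed subgroup of index $3$) guarantees that any topologically $2$-generated open subgroup $M \leq P$ with $\det(M) = \Z_2^\times$ has exactly the three maximal closed subgroups corresponding to the hyperplanes of $M/\Phi(M) \cong \FF_2^2$, each a determinant preimage, so such an $M$ is automatically minimal. The paper instead chooses $A,B$ in all of $\GL_2(\Z_2)$ subject only to $\det A \equiv 3$, $\det B \equiv 5 \pmod 8$ and asserts that $\langle A,B\rangle$ has ``precisely three maximal closed subgroups,'' which is not automatic when $\langle A,B\rangle$ surjects onto $\GL_2(\Z/2\Z) \cong S_3$ (an index-$3$ maximal subgroup would then have surjective determinant); your restriction to the pro-$2$ setting is exactly the repair that step needs.

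The genuine gap is in Step 3, which is the mathematical heart of the theorem. You assert that $\overline{\langle x_1,x_2\rangle}$ is open in $\GL_2(\Z_2)$ if and only if $\{x_1,x_2\}$ lies in no proper algebraic subgroup, citing ``standard $p$-adic Lie theory or a Pink-type classification,'' and you yourself concede that combining this with the determinant cosets ``requires care.'' That equivalence is a theorem to be proved, not a routine citation, particularly at $p=2$: one must show that every non-open closed subgroup of $\GL_2(\Z_2)$ is contained in a Borel, a (possibly non-split) Cartan normalizer, an exceptional subgroup with finite image in $\PGL_2$, or a subgroup on which the determinant has finite image (such as $\SL_2(\Z_2)$) --- this last family is absent from your list and is excluded only because $3$ and $5$ are not roots of unity in $\Z_2^\times$. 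The usual route is to show that the $\Z_2$-Lie algebra of a Zariski-dense closed subgroup is stable under conjugation by all of $\GL_2$, hence equals $0$, the scalars, $\mathfrak{sl}_2$, or $\mathfrak{gl}_2$, and that the first three are incompatible with surjective determinant; none of this appears in your write-up. This is precisely the content the paper supplies in Theorem~\ref{mainthm}, where openness of $\langle A,B\rangle$ is reduced to the nonvanishing of an explicit $8$-variable $2$-adic power series built from $\log(A^{12})$, $\log(B^{12})$ and their brackets, verified computationally in all $96^2$ residue classes modulo $4$, together with the measure-zero lemma for zero sets of nonzero power series. Your Zariski-density argument is a viable alternative to the paper's measure-theoretic one, but as written the decisive implication is stated rather than proved.
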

Our approach for proving the above result is the following. If $A$ and $B$ are randomly chosen elements of $H$, let $\langle A, B \rangle$ be the smallest closed subgroup containing $A$ and $B$. We show that
if $\det(\langle A, B \rangle) = \Z_{2}^{\times}$, then $\langle A, B \rangle$ is a minimal subgroup of $H$ with probability $1$.

Finally, while there are only $8$ non-CM elliptic curves $E/\Q$ with bad reduction only at $2$, we can give
an infinite family of elliptic curves $E$ defined over quadratic extensions of $\Q$ with bad reduction only at $2$ that have minimal $2$-adic image.

\begin{prop}
\label{prop:quadfamily}
Suppose that $n$ is a positive integer and let $a = \sqrt{-(2^{n}+1)}$. Let
\[
E : y^{2} = x^{3} + 2ax^{2} + (a^{2} + 1)x.
\]
Then $E$ has bad reduction only at prime ideals above $2$ in $\Q(a)$. 
If $n$ is odd and $n \ne 3$, then $\rho_{E,2^{\infty}}(G_{\Q(a)})$ is minimal and has RSZB label \texttt{8.24.0.86}. If $n = 2$, then $\rho_{E,2^{\infty}}(G_{\Q(a)})$ is minimal and has RSZB label \texttt{16.384.9.895}. If $n = 10$, then $\rho_{E,2^{\infty}}(G_{\Q(a)})$ is minimal and has RSZB label \texttt{16.384.9.894}.
\end{prop}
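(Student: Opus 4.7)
My plan is first to verify via a direct discriminant calculation that $E$ has bad reduction only at primes above $2$, and then to analyze the $2$-adic Galois image case by case using explicit torsion computations combined with the classification of subgroups of $\GL_2(\Z_2)$ from \cite{RSZB}.

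For the reduction type, write $E: y^2 = x(x^2 + 2ax + (a^2+1))$. The standard formula for a curve of this shape gives $\Delta_E = 16(a^2+1)^2\bigl(4a^2 - 4(a^2+1)\bigr) = -64(a^2+1)^2$, and substituting $a^2+1 = -2^n$ yields $\Delta_E = -2^{6+2n}$, a unit times a power of $2$. Since $2^n+1$ is odd, $\Q(a)/\Q$ is unramified at every odd prime, so $v_\wp(\Delta_E) = 0$ at every prime $\wp$ of $\Q(a)$ not above $2$, while $v_\wp(j_E) < 0$ at primes above $2$. Hence $E$ has bad reduction exactly at primes above $2$.

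For the $2$-adic image, factor $x^2 + 2ax + (a^2+1) = (x+a)^2 + 1$, so the nontrivial $2$-torsion has $x$-coordinates $0$ and $-a \pm i$. The rational point $(0,0) \in E(\Q(a))$ places $\rho_{E,2}(G_{\Q(a)})$ inside a Borel, and the full $2$-torsion field is $\Q(a,i)$. By Mihailescu's theorem, $2^n + 1$ is a perfect square only when $n = 3$, which is precisely the case $\Q(a) = \Q(i)$ that the proposition excludes; for $n \neq 3$ we have $[\Q(a,i):\Q(a)] = 2$, pinning down the mod-$2$ image. I would then compute the splitting fields of the $4$- and $8$-division polynomials of $E$ over $\Q(a)$ and match generators of the resulting Galois group against \texttt{8.24.0.86} in \cite{RSZB}. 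For $n = 2$ and $n = 10$, both named labels have level $16$, so I would compute $\rho_{E,16}(G_{\Q(a)})$ directly using Magma's $2$-adic image algorithm and match against the \cite{RSZB} database to recover \texttt{16.384.9.895} and \texttt{16.384.9.894}; minimality in each case is then a finite lattice check inside \cite{RSZB}.

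The main obstacle is the uniformity-in-$n$ step for the infinite odd family: upgrading a verification at one or two values of $n$ to a proof for every odd $n \neq 3$. The key local observation is that the $2$-adic completion $\Q_2(a)$ stabilizes as $n$ grows odd (since $2^n + 1 \equiv 1 \pmod{8}$ for $n \geq 3$, so $\Q_2(a) = \Q_2(i)$ for all such $n$), leaving only finitely many $2$-adic local isomorphism types to consider. I would combine this local periodicity with a global entanglement argument: since $\Q(a)(E[2^\infty])/\Q(a)$ is unramified outside $2$, any deviation from the expected mod-$2^k$ image would force an abelian $2$-extension of $\Q(a)$ of prescribed conductor ramified only at $2$, which can be ruled out using the structure of $\Q(a)(\zeta_{2^\infty})/\Q(a)$ and the N\'eron--Ogg--Shafarevich reasoning from Section \ref{sub:1Ex}. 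Once this uniformity is established, containment in \texttt{8.24.0.86} for all odd $n \neq 3$, equality by index, and minimality all follow from finite checks against the \cite{RSZB} subgroup lattice.
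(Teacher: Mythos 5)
Your discriminant computation and your plan for $n=2$ and $n=10$ (direct computation of the $2$-adic image and a database match) are fine and consistent with what the paper does. The genuine gap is in your treatment of the infinite family of odd $n$. Your proposed uniformity step leans on ``the N\'eron--Ogg--Shafarevich reasoning from Section~\ref{sub:1Ex},'' but that argument depends essentially on Tate's theorem about extensions of $\Q$ unramified outside $2$ --- a fact special to the base field $\Q$. Over the imaginary quadratic fields $\Q(\sqrt{-(2^n+1)})$ there are in general many abelian $2$-extensions unramified outside $2$ (the $2$-part of the class group alone already contributes everywhere-unramified extensions, and it grows with $n$), so ``any deviation from the expected image forces an extension that can be ruled out'' does not go through. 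Likewise, the stabilization of the completion $\Q_2(a)$ controls only the image of a decomposition group at $2$, not the global image, so local periodicity cannot close the gap by itself.

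The paper needs no uniformity-in-$n$ argument about the Galois image at all, and the idea you are missing is that minimality of the \emph{group} \texttt{8.24.0.86} is precisely what converts containment into equality. The family $y^{2}=x^{3}+2tx^{2}+(t^{2}+1)x$ is the universal curve for \texttt{4.12.0.12}, and a model for the relevant cover shows the image lies in \texttt{8.24.0.86} exactly when $t^{2}+1=-2u^{2}$; this holds identically for $t=a$ with $n$ odd, since $a^{2}+1=-2^{n}=-2\bigl(2^{(n-1)/2}\bigr)^{2}$. Once the image sits inside a minimal group and $\det$ is surjective, every proper closed subgroup has non-surjective determinant, so the image must equal \texttt{8.24.0.86}. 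Surjectivity of $\det$ requires $\Q(a)\notin\{\Q(i),\Q(\sqrt{2}),\Q(\sqrt{-2})\}$, i.e.\ that $2^{n}+1$ is neither a square nor ($\pm$) twice a square for odd $n\ne 3$; you address the square case but should also record the (easy) twice-a-square exclusion. With that in place, no division-polynomial computation for individual $n$ and no entanglement argument are needed.
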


An outline of the paper is as follows. In Section~\ref{sec:2adic} we prove Theorem~\ref{thm:S=2} and discuss
a generalization to principally polarized abelian varieties. In Section~\ref{sec:plentiful} we prove Theorem~\ref{thm:lots} using ideas from $p$-adic Lie theory. In Section~\ref{sec:Gen0} we describe a search to find all the genus 0 minimal groups and then compute models for the corresponding modular curves. The models for these curves were computed using \cite{Magma}, the techniques in \cite{rakvi}, and guided by the information available in \cite{lmfdb}. All the code for these computations is available at \cite{github}. Lastly, we prove Proposition~\ref{prop:quadfamily} in Section~\ref{sec:ExQF}. 

\subsection{Acknowledgements}

We would first like to thank the anonymous referee for their thoughtful comments on a previous version of this paper. The second author is thankful to Amherst College for hospitality during a visit in September 2022.

\section{Background}\label{sec:Back}
The goal of this section is to establish notation and remind the reader of the basic facts necessary for the results in this paper. 
For more detail about elliptic curves, readers should see \cite{Silv1,Silv2}. 
For more information about modular curves the reader should see \cite{ALRBook1}.
Lastly, for more information about $p$-adic Lie theory see \cite{BookProP}.

\subsection{Elliptic Curves}

Elliptic curves are defined as smooth projective genus 1 curves with a specified point. 
They are ubiquitous in mathematics and can be found in the center of many of the open problems in modern number theory.

One of the most interesting aspects of elliptic curves is that given an elliptic curve $E$ defined over a number field $K$, the set of $K$-rational points on $E$ can be given the structure of a finitely generated abelian group. That is to say, that $E(K)\simeq \ZZ^r\oplus T,$
for some $r\in\ZZ_{\geq0}$ and finite abelian group $T$.

We call $r$ the {\it rank} of $E$ over $K$, and $T$ the torsion subgroup of $E(K)$, often denoted $E(K)_\tor.$ A classical result in the study of elliptic curves is that if we fix an algebraic closure of $K$, denoted $\overline{K}$, then
$$E[n] = \{P \in E(\overline{K}) : nP = \mathcal{O}\}\simeq (\ZZ/n\ZZ)^2.$$
Further, there is a natural componentwise action of $\Gal(\overline{K}/K)$ on $E[n]$ that induces a representation
$$\bar\rho_{E,n}\colon\Gal(\overline{K}/K)\to \GL_2(\ZZ/n\ZZ).$$

We call $\bar\rho_{E,n}$ the {\it mod $n$ representations associated with $E$}. Using the mod $n$ representations associated to $E$ and the appropriate inverse limits, we can define the $p$-adic and adelic Galois representations attached to $E$. We denote these representations
\[
\rho_{E,p^\infty}\colon\Gal(\overline{K}/K)\to \GL_2(\ZZ_p), \hbox{ and}
\]
\[
\rho_{E}\colon \Gal(\overline{K}/K) \to \GL_2(\widehat\ZZ)\simeq \prod_p \GL_2(\ZZ_p).
\]

Here we note these representations all depend on which points we pick as a basis for the $n$-torsion of $E(K)$,
and changing the basis replaces the image with a conjugate subgroup. It is not hard to see that if $H \subseteq \GL_{2}(\ZZ_{S})$ is minimal, then every conjugate of $H$ in $\GL_{2}(\ZZ_{S})$ is also minimal.

Given an elliptic curve $E/K$, we denote by $j(E)$ the usual $j$-invariant of $E$. If $E^{D}$ is the quadratic twist of $E$ by $D$ (i.e. if $E : y^{2} = x^{3} + ax^{2} + bx + c$, then $E^{D} : y^{2} = x^{3} + Dax^{2} + D^{2}bx^{3} + D^{3}c$), then $j(E) = j(E^{D})$. Conversely, if $E_{1}$ and $E_{2}$ are two elliptic curves defined over $K$ with $j(E_{1}) = j(E_{2}) \not\in \{ 0, 1728 \}$, then $E_{2}$ is a quadratic twist of $E_{1}$.

\subsection{Modular Curves}

Modular curves are an important tool for studying elliptic curves and their Galois representations. The points on these curves correspond to elliptic curves whose mod $n$ Galois representations are contained inside of a particular subgroup of $\GL_2(\ZZ/n\ZZ)$ up to conjugation. The goal of this subsection is to give the basic definitions and theorems needed for this paper. The interested reader is encouraged to see \cite{ALRBook1}. For the remainder of the section $n$ will be an integer greater than 1.



Associated to each group $G\subseteq \GL_2(\ZZ/n\ZZ)$ such that $\det(G) = (\ZZ/n\ZZ)^\times$, and $-I\in G$ is a modular curve $X_G$. 
The curve $X_G$ is a smooth, projective, and geometrically integral curve defined over $\QQ$. We say that $G$ has genus $g$ if $X_{G}$ is a curve with genus $g$.

Again, assuming that the group $G$ contains $-I$, the curve $X_G$ comes with a natural map 
\[
\pi_G: X_G\to \PP^1_\QQ
\]
called the $j$-map associate to $G$, such that if $E/K$ is an elliptic curve with $j(E) \not\in \{ 0, 1728 \}$, then $\Im\bar\rho_{E,n}$ is conjugate to a subgroup of $G$ if and only if there is a $P\in X_G(K)$ such that $j(E) = \pi_G(P)$.

In the case that $-I\not\in G$, $X_{G}$ is an algebraic stack and its course space is the same as $X_{\tilde{G}}$, where $\tilde{G} = \left\langle G,-I \right\rangle$. In this case the moduli interpretation is different; if $-I \not \in G$, if the image of $\bar\rho_{E,n} \subseteq G$, this does not imply the same for the quadratic twist $E_{D}$. In this situation, if $U$ is the complement in $X_{\tilde{G}}$ of the cusps and preimages of $j = 0$ and $1728$, there is a universal elliptic curve $\mathcal{E} \to U$ so that $E$ has image contained in $G$ if and only if there is some (not necessarily unique) $t$ so that $E \cong \mathcal{E}_{t}$ over $K$. For more detail see \cite[Section 2 and 5]{RZB}. 

\begin{remark}
A careful reading of the moduli interpretation, one notices that an elliptic curve $E/\QQ$ having a corresponding point on $X_G(\QQ)$ does not ensure that $\Im\bar\rho_{E,n}$ is conjugate to $G$. It only ensures that $\Im\bar\rho_{E,n}$ is conjugate to a subgroup of $G$. That said, if $G$ is a minimal group containing all matrices $\equiv I \pmod{n}$, since $\det \circ \bar\rho_{E,n}$ is surjective, it must be that $\Im\bar\rho_{E,n} = G.$
\end{remark}

There has been an immense amount of progress on computing modular curves $X_G$, their $j$-maps $\pi_G$, and their rational points. The work here relies directly or indirectly on the previous work and so we mention some of that work here. In particular, \cite{RZB,RSZB,Zyw} all make major contributions to our understanding the modular curves associates to subgroups of $\GL_2(\ZZ/\ell^k\ZZ)$ for a prime $\ell$ and integer $k\geq 1.$ Work has now started on understanding how these images can occur simultaneously and how they fit together \cite{DM,DLRM,DGJ,Zyw2}.

\section{Minimality is a 2-adic phenomena}\label{sec:2adic}

Notice that if $H$ is a minimal subgroup of $\GL_2(\ZZ_S)$, then every maximal open subgroup of $H$ can be obtained as the inverse image of a subgroup of $\ZZ_S^\times$ under the map $\det:H\to\ZZ_S^\times$. 
This is because if $M_1$ and $M_2$ were maximal subgroups of $H$ such that $\det(M_1) = \det(M_2)\subsetneq\ZZ_S^\times,$ then $\det^{-1}(\det(M_1)) = \det^{-1}(\det(M_2))$ would be a proper subgroup of $H$ that contains both $M_1$ and $M_2.$
The maximality of $M_1$ and $M_2$ then forces $M_1= M_2.$
So if $H$ is minimal, then every maximal subgroup of $H$ is normal in $H$. 
Further, if we let $p\in S$ , $k\in \ZZ_+$ and $\pi_{p^k}:H\to\GL_2(\ZZ/p^k\ZZ)$ be the standard component-wise reduction map on $H$, the $\pi_{p^k}(H)$ also has the property that all of its maximal subgroups are normal. 

Without giving too much background, we recall the definition of a nilpotent group.
\begin{definition}
	Let $G$ be a group. 
	We say that $G$ is {\bfseries nilpotent} if $G$ has a finite central series. 
\end{definition}
The following result demonstrates the relevance of this concept to $\pi_{p^{k}}(H)$.

\begin{theorem}{\rm \cite[Chapter 6, Theorem 3 \& Corollary 4]{DF}}\label{thm:finite_nilpotent}
Let $G$ be a finite group. 
The following are equivalent;
\begin{enumerate}
	\item The group $G$ is nilpotent.
	\item Every Sylow subgroup of G is normal.
	\item The group $G$ is the direct product of its Sylow subgroups.
	\item Every maximal subgroup of $G$ is normal in $G$. 
\end{enumerate}	
\end{theorem}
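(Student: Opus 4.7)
The plan is to prove the four-way equivalence by establishing the cycle $(1)\Rightarrow(2)\Rightarrow(3)\Rightarrow(1)$ together with $(1)\Rightarrow(4)\Rightarrow(2)$. The workhorse behind the implications out of $(1)$ is the \emph{normalizer condition}: in a nilpotent group $G$, every proper subgroup $H$ is properly contained in its normalizer $N_G(H)$. I would prove this first by induction on the nilpotency class, using that $Z(G)$ is nontrivial and passing to the quotient $G/(Z(G)\cap H)$, which has strictly smaller class; if $Z(G)\not\subseteq H$ the inclusion is already visible in $HZ(G)\supsetneq H$, and otherwise one lifts the normalizer condition from the quotient.

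Granting the normalizer condition, $(1)\Rightarrow(4)$ is immediate: if $M$ is maximal then $M<N_G(M)\le G$ forces $N_G(M)=G$. For $(1)\Rightarrow(2)$, let $P$ be a Sylow $p$-subgroup; the key observation is that $N_G(N_G(P))=N_G(P)$, since $P$ is the unique Sylow $p$-subgroup of $N_G(P)$ and hence characteristic there, so any element normalizing $N_G(P)$ also normalizes $P$. Combined with the normalizer condition, this forces $N_G(P)=G$. The implication $(2)\Rightarrow(3)$ is a standard order computation: normal Sylow subgroups for the distinct prime divisors of $|G|$ have pairwise trivial intersection, and their internal product has order $|G|$, exhibiting $G$ as the internal direct product. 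Then $(3)\Rightarrow(1)$ follows because every finite $p$-group is nilpotent (its upper central series must strictly ascend until it reaches $G$, by the class equation applied repeatedly to centers) and nilpotency is closed under finite direct products.

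The one remaining step, $(4)\Rightarrow(2)$, is where I expect the main creative input: a Frattini-style argument. Suppose some Sylow $p$-subgroup $P$ fails to be normal, so $N_G(P)$ lies in some maximal subgroup $M$; by hypothesis $M\triangleleft G$, so for each $g\in G$ the conjugate $gPg^{-1}$ is still a Sylow $p$-subgroup of $M$, hence conjugate to $P$ inside $M$ by Sylow's theorem applied within $M$: $gPg^{-1}=mPm^{-1}$ for some $m\in M$. Then $m^{-1}g\in N_G(P)\subseteq M$, forcing $g\in M$ and contradicting $M\ne G$. The principal obstacle is setting up the normalizer-condition induction cleanly and verifying the characteristic-subgroup observation in the Sylow argument; once those are in place, the remaining steps are either immediate or one-line applications of Sylow's theorem inside a normal subgroup.
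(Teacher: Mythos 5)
Your proof is correct and is essentially the standard argument; the paper itself gives no proof of this statement, citing it directly from Dummit--Foote, and the cited source establishes the equivalence by exactly the route you describe (the normalizer condition proved by induction on the class, the self-normalizing-normalizer fact $N_G(N_G(P))=N_G(P)$ for Sylow subgroups, and the Frattini argument for $(4)\Rightarrow(2)$). The only cosmetic point is in your normalizer-condition induction: the quotient $G/(Z(G)\cap H)$ has strictly smaller class only in the case $Z(G)\subseteq H$, which is precisely the case where you use it, so the argument stands as written.
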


From this we immediately get the following proposition

\begin{prop}
If $H$ is a minimal subgroup of $\GL_2(\ZZ_S)$, then for every $p\in S$ and $k\in \ZZ_+$, we have that $\pi_{p^k}(H)$ is nilpotent. 
\end{prop}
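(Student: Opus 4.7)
The plan is to invoke Theorem \ref{thm:finite_nilpotent}, specifically the equivalence (1)$\Leftrightarrow$(4): a finite group is nilpotent if and only if every maximal subgroup is normal. Since $\pi_{p^k}(H)$ is a finite subgroup of $\GL_2(\ZZ/p^k\ZZ)$, it suffices to show that every maximal subgroup of $\pi_{p^k}(H)$ is normal in $\pi_{p^k}(H)$.

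The first step is to reuse the observation already made at the start of Section \ref{sec:2adic}: if $H$ is minimal, then every maximal closed subgroup $M$ of $H$ must equal $\det^{-1}(\det(M))$, and since $\ZZ_S^\times$ is abelian, every subgroup of the form $\det^{-1}(U)$ is normal in $H$. Thus every maximal closed subgroup of $H$ is normal. The second step is a correspondence argument. Let $N = H \cap \ker(\pi_{p^k})$, which is an open (hence closed) normal subgroup of $H$ since $\pi_{p^k}$ is continuous onto a finite group. Closed subgroups of $H$ containing $N$ are in bijection with subgroups of $\pi_{p^k}(H)$ via $L \mapsto \pi_{p^k}(L)$, and this bijection preserves maximality: if $M$ is maximal in $\pi_{p^k}(H)$ and $\pi_{p^k}^{-1}(M)\cap H \subsetneq L \subseteq H$ is a closed subgroup, then $L$ contains $N$, so $L = \pi_{p^k}^{-1}(\pi_{p^k}(L))\cap H$, and $\pi_{p^k}(L)$ strictly contains $M$, forcing $\pi_{p^k}(L) = \pi_{p^k}(H)$ and hence $L = H$.

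Putting these together, let $M$ be any maximal subgroup of $\pi_{p^k}(H)$. Then $\pi_{p^k}^{-1}(M) \cap H$ is a maximal closed subgroup of $H$, and so it is normal in $H$ by the first step. Since $\pi_{p^k}$ restricted to $H$ is surjective onto $\pi_{p^k}(H)$, the image $M = \pi_{p^k}(\pi_{p^k}^{-1}(M)\cap H)$ is normal in $\pi_{p^k}(H)$. Thus every maximal subgroup of the finite group $\pi_{p^k}(H)$ is normal, and Theorem \ref{thm:finite_nilpotent} gives nilpotency.

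There is no real obstacle; the proposition is essentially a formal consequence of the observation already recorded (maximal closed subgroups of $H$ are kernels of determinant-based characters, hence normal) combined with the correspondence theorem. The only small point that requires care is insisting throughout on closed subgroups, as specified in the paper's convention, so that the pullback of a maximal subgroup of $\pi_{p^k}(H)$ is genuinely a maximal closed subgroup of $H$ rather than merely being maximal among open subgroups.
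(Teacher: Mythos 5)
Your proof is correct and follows the same route as the paper: the paper's argument is exactly that minimality forces every maximal closed subgroup of $H$ to be $\det^{-1}$ of a subgroup of the abelian group $\ZZ_S^\times$ and hence normal, that this property passes to the finite quotient $\pi_{p^k}(H)$, and that Theorem~\ref{thm:finite_nilpotent} then gives nilpotency. You have merely written out the correspondence-theorem step that the paper leaves implicit.
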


Before we prove the main result of this section, we need one more definition.

\begin{defn}
	Let $G\subseteq \GL_2(\ZZ_S)$ be an open subgroup and given any $n\in \ZZ_+$ only divisible by primes in $S$, define $\rho_n : \GL_2(\ZZ_S)\to \GL_2(\ZZ/n\ZZ)$ to be the standard component-wise reduction map. 
	Suppose there is an $n\in \ZZ_+$ such that $G = \rho_n^{-1}(\rho_n(G))$. In this case we say that $G$ has {\bfseries finite level} and we define the {\bfseries level} of $G$ is the smallest $N\in \ZZ_+$ such that $G = \rho_N^{-1}(\rho_N(G))$. 
	If no such $n\in\ZZ_+$ exists, we say that $G$ has infinite level.
\end{defn}


\begin{remark}
The assumption that $H$ has finite level is not crucial. It is known by work of Nikolov and Segal \cite{NS} that every topologically finitely generated profinite group has the property that every finite index subgroup is open. 
\end{remark}

We are now ready to provide a proof of Theorem~\ref{thm:S=2}.

\begin{proof}[Proof of Theorem~\ref{thm:S=2}]
We prove this theorem by contradiction. 
Suppose that $S$ is a nonempty set of primes, $p$ is an odd prime such that $p\in S$, and $H$ is a minimal subgroup of $\GL_2(\ZZ_S)$ of finite level.

Since $H$ has finite level we know that there is a $k\in \ZZ_+$ such that $p^{k-1}$ divides the level of $H$, while $p^{k}$ does not. From this, it follows that $M \in \rho_{p^{k}}(H)$ if and only if $M \in \rho_{p^{k-1}}(H)$. In particular, $H_{p^{k}} = \rho_{p^{k}}(H)$ contains all matrices equivalent to the identity mod $p^{k-1}$.

As we noted before, we know that $H_{p^k}$ is a finite nilpotent group. 
From Theorem $\ref{thm:finite_nilpotent}$ we have that $H_{p^k}$ must be the direct product of its $p$-Sylow subgroups. 
That is, if we let $P = {\rm Syl}_p(H_{p^k})$ and $Q = \prod_{q\neq p} {\rm Syl}_q(H_{p^k})$, then $H_{p^k} \simeq P\times Q$. 
So given any $B \in H_{p^k}$ we can find $X\in Q$ and $Y\in P$ such that $B = XY$. We also know that $X$ and $Y$ commute with each other in $H_{p^k}\simeq P\times Q$ since we have $X\in Q$ and $Y\in P$ . 
Since $Y\in {\rm Syl}_p(H_{p^k})$, we know that the order of $Y$ is a power of $p$ and thus for some $j$ $\det(Y)^{p^{j}} = \det(Y^{p^j}) = \det(I) \equiv 1 \bmod p.$ Fermat's little theorem says $\det(Y)^{p^{j}} \equiv 1 \bmod p$ forces $\det(Y) \equiv 1 \bmod p$.

Next, notice that for any $Z\in M_2(\ZZ/p^k\ZZ)$ we have that $I+p^{k-1}Z\in H_{p^k}$ since $I+p^{k-1}Z \equiv I \bmod p^{k-1}$. 
Now, a simple computation shows that $I+p^{k-1}Z$, has order dividing $p$ and so it must be that $(I+p^{k-1}Z)\in P$. 
Since everything in $P$ commutes with the elements of $Q$ we have that
\[
X(I+p^{k-1}Z) = (I+p^{k-1}Z)X.
\]
An immediate consequence of this is that $$XZ \equiv ZX \bmod p.$$
Since $Z$ was an arbitrary element of $M_2(\ZZ/p^k\ZZ)$, we can see that $X \bmod p$ commutes with everything in $M_2(\ZZ/p\ZZ)$. 
An elementary computation shows that it must be that 
$$X \equiv \begin{pmatrix}
	\alpha&0\\0&\alpha
\end{pmatrix} \bmod p$$
for some $\alpha\in \ZZ/p\ZZ$ and so $\det(X) \equiv \alpha^2 \bmod p.$ 
Bringing it all together we get that 
$$\det(B) = \det(XY) = \det(X)\det(Y) \equiv \alpha^2 \bmod p.$$
Thus for every $B\in H$, $\det(B)$ is a quadratic residue in $\ZZ/p\ZZ$. 
Since $p$ is an odd prime, not every element of $\ZZ/p\ZZ$ is a quadratic residue. 
This contradicts the assumption that $\det(H) = \ZZ_S^\times$.
\end{proof}

\begin{remark}
One way to make sense of what is happening here is that we are requiring our minimal groups to have surjective determinant. Looking at $\ZZ_p^\times$, we see that there is structural difference depending on if $p=2$ or if $p$ is odd. When $p=2$, the group $\ZZ_2^\times$ is a 2-group, but when $p$ is odd, $\ZZ_p^\times$ is not a $p$-group. This is because $(\ZZ/p\ZZ)^\times$ has size $p-1$. This factor of $p-1$ is what prevents $\ZZ_p^\times$ from being a $p$-group when $p$ is odd, but there is no problem here when $p=2.$
\end{remark}

\subsection{A diversion into higher dimensional abelian varieties}

Elliptic curves are $1$-dimensional abelian varieties, and much of what is true for elliptic curves is also true for abelian varieties being careful to adjust the details where necessary. Given the aim of this paper, we will not be able to provide all of the background information here, but the interested reader is encouraged to see \cite{MilneAV}.

To start, we let $A$ be a principally polarized $g$-dimensional abelian variety defined over a number field $K$ disjoint from $\Q(\zeta_{n})$. Then because the Weil-pairing is a non-degenerate, alternating, Galois invariant bilinear form on $A[n]$, it follows that the mod $n$ Galois representations associated to $A$,
\[
\bar\rho_{A,n}:\Gal(\overline{K}/K)\to \Aut(A[n])\simeq \GL_{2g}(\ZZ/n\ZZ)
\]
actually has its image (up to conjugation) contained inside of ${\rm GSp}_{2g}(\ZZ/n\ZZ)$. To define this group, 
let $\Omega$ be the $2g \times 2g$ block matrix of the form
\[
 \Omega = \begin{bmatrix}
 0 & -I_{g}\\
 I_{g} & 0 \end{bmatrix},
\]
where $I_{g}$ is the $g \times g$ identity matrix. For a ring $R$, the group ${\rm GSp}_{2g}(R)$ can be defined as the set of $2g \times 2g$ matrices $M$, with entries in $R$ such that
\[
M^T\Omega M = \lambda \Omega
\]
for some $\lambda\in R^\times$. With this, we can define a map 
\[
{\rm Mult}\colon{\rm GSp}_{2g}(R) \to R^\times
\]
given by ${\rm Mult}(M) = \lambda$, where $M^T\Omega M = \lambda \Omega$. It turns out that in the case that $A$ is a principally polarized $g$-dimensional abelian variety defined over a number field $K$ disjoint from $\Q(\zeta_{n})$, properties of the Weil pairing imply that
\[
{\rm Mult} \circ \bar\rho_{A,n} : \Gal(\overline{K}/K) \to (\Z/n\Z)^{\times}\]
is surjective.

With this in hand, we can see that the argument in the proof of Theorem \ref{thm:S=2} generalizes to ${\rm GSp}_{2g}(\Z_{S})$. In particular, if we assume
that $A/K$ is an abelian variety of dimension $g$, $p$ is a prime, and the image of the mod $p^{k}$ Galois representation attached to $A$ is nilpotent and contains all matrices in ${\rm GSp}_{2g}(\Z/p^{k} \Z)$ that are congruent to the identity modulo $p^{k-1}$, then any $X$ in the image of $\bar\rho_{A,p^{k}}$ with order coprime to $p$ must commute with matrices $I + p^{k-1} Z \in {\rm GSp}_{2g}(\Z/p^{k} \Z)$.
Writing
\[
 X = \begin{bmatrix} X_{1} & X_{2} \\ X_{3} & X_{4} \end{bmatrix}
\]
and taking 
\[
 Z = \begin{bmatrix} A & 0 \\ 0 & -A^{T} \end{bmatrix} 
\]
for an arbitrary $A \in \GL_{g}(\Z/p\Z)$ shows that $X_{1} \equiv \lambda_{1} I_{g} \bmod p$
and $X_{4} \equiv \lambda_{2} I_{g} \bmod p$.

Taking 
\[
 Z = \begin{bmatrix} I_{g} & I_{g} \\ 0 & -I_{g} \end{bmatrix}
\]
shows that $X_{3} \equiv 0 \bmod p$ and taking
\[
 Z = \begin{bmatrix} I_{g} & 0 \\ I_{g} & -I_{g} \end{bmatrix}
\]
shows that $X_{2} \equiv 0 \bmod p$ and that $X_{1} \equiv X_{4} \bmod p$. It follows that
\[
 X \equiv \begin{bmatrix} \lambda I_{g} & 0 \\ 0 & \lambda I_{g} \end{bmatrix} \bmod p.
\]
which shows that ${\rm Mult}(X) \equiv \lambda^{2} \bmod p$.

Any element in the image of $\bar\rho_{A,p^{k}}$ must have the form $XY$
for some $Y$ with order a power of $p$. This implies that ${\rm Mult}(Y) \equiv 1 \bmod p$, and so
${\rm Mult}(XY) = {\rm Mult}(X) {\rm Mult}(Y) \equiv \lambda^{2} \bmod p$. If $p > 2$, this contradicts
that ${\rm Mult} \circ \bar\rho_{A,p}$ is surjective, and this is a contradiction.
{}
\subsection{A diversion into CM elliptic curves} 

In the definition of minimal, we assumed that $H \subseteq \GL_{2}(\ZZ_{S})$ was a finite index subgroup and 
$\det : H \to \ZZ_{S}^{\times}$ is surjective. This will be true if $H$ is the $S$-adic image of Galois for a non-CM elliptic curve $E$ defined over some number field $K$ with the property that $K \cap \Q(\zeta_{n}) = K$
for any positive integer $n$ all of whose prime factors are in $S$ (as proven in \cite{Serre68}). What about the CM case?

First, at no point in the argument in Subsection~\ref{sub:1Ex} do we need to exclude CM elliptic curves. Thus the argument applies also to the CM case and shows that if $E/\Q$ is a CM elliptic curve with bad reduction only at $2$, then $\Im \rho_{E,2^\infty} \subseteq \GL_{2}(\Z_{2})$ is a subgroup for which $\det(\Im \rho_{E,2^\infty}) = \Z_{2}^{\times}$ and for which every proper closed subgroup comes from the determinant. 

However, the argument in the proof of Theorem~\ref{thm:S=2} used in a crucial way that the image of the $S$-adic Galois representation had finite level, and this will not be true for CM curves. This raises the following open question.

\begin{question}
Is there a finite set of primes $S$ containing at least one odd prime, a number field $K$ with the property that $K \cap \Q(\zeta_{n}) = \Q$ for all positive integers $n$ all of whose prime factors are in $S$, and a CM elliptic curve $E/K$ for which every maximal closed subgroup of the $S$-adic image of Galois comes from $\Z_{S}^{\times}$?
\end{question}

\section{Minimal groups are plentiful}\label{sec:plentiful}

The goal of this section is to show that the minimal groups are plentiful inside of $\GL_2(\ZZ_2)$. We start with a lemma about how minimal groups are generated.

\begin{lemma}
If $G\subseteq \GL_2(\ZZ_2)$ is a minimal group, then $G$ must be (topologically) generated by 2 elements of $\GL_2(\ZZ_2)$. 
\end{lemma}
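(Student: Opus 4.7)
The plan is to compute the Frattini quotient $G/\Phi(G)$, where $\Phi(G)$ denotes the intersection of all maximal closed subgroups of $G$, and then invoke the standard fact that for any profinite group, the minimum number $d(G)$ of topological generators equals $d(G/\Phi(G))$. This reduces the claim to showing that $G/\Phi(G)$ is generated by two elements.

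The first step is to identify $\Phi(G)$ via the determinant map. As already observed at the beginning of Section~\ref{sec:2adic}, minimality forces every maximal closed subgroup $M \subsetneq G$ to satisfy $M = \det^{-1}(\det(M))$, and maximality of $M$ forces $\det(M)$ to be a maximal closed subgroup of $\ZZ_2^\times$. Conversely, for any maximal closed subgroup $N \subsetneq \ZZ_2^\times$, the preimage $\det^{-1}(N)$ is a proper closed subgroup of $G$ of index $[\ZZ_2^\times : N] = 2$, hence maximal in $G$. Thus the maximal closed subgroups of $G$ correspond bijectively (via $\det$) to those of $\ZZ_2^\times$, and intersecting yields
\[
\Phi(G) = \det^{-1}(\Phi(\ZZ_2^\times)),
\]
so $\det$ induces an isomorphism $G/\Phi(G) \cong \ZZ_2^\times/\Phi(\ZZ_2^\times)$.

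The second step is to compute $\Phi(\ZZ_2^\times)$. Using the decomposition $\ZZ_2^\times \cong \{\pm 1\} \times (1 + 4\ZZ_2) \cong \ZZ/2\ZZ \times \ZZ_2$, a direct check enumerates the three surjections to $\ZZ/2\ZZ$ and shows $\Phi(\ZZ_2^\times) = (\ZZ_2^\times)^2$ has index $4$, with quotient isomorphic to $(\ZZ/2\ZZ)^2$. Combining with the previous step, $G/\Phi(G) \cong (\ZZ/2\ZZ)^2$, which is generated by $2$ elements, and hence so is $G$.

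The only potentially delicate point is the identity $d(G) = d(G/\Phi(G))$ when $G$ is not assumed to be pro-$2$; this is the standard ``$\Phi(G)$ consists of non-generators'' argument, which lifts generators of $G/\Phi(G)$ back to generators of $G$ because any proper closed subgroup of $G$ is contained in some maximal closed subgroup, which in turn contains $\Phi(G)$. Since $G$ has finite index in the topologically finitely generated group $\GL_2(\ZZ_2)$, the ambient theory applies without modification.
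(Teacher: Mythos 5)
Your proof is correct and follows essentially the same route as the paper's: the paper's (very terse) proof likewise observes that the maximal closed subgroups of $G$ are exactly the three preimages under $\det$ of the maximal closed subgroups of $\ZZ_2^\times$, leaving the resulting Frattini-quotient computation $G/\Phi(G)\cong \ZZ_2^\times/\Phi(\ZZ_2^\times)\cong(\ZZ/2\ZZ)^2$ and the non-generator argument implicit. You have simply written out those standard details carefully, including the point that maximal closed subgroups here are open of index $2$, so nothing is missing.
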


\begin{proof}
Suppose $G\subseteq \GL_2(\ZZ_2)$ is a minimal group. Then $G$ must have precisely three maximal closed subgroups, each of which is the preimage under $\det : G \to \ZZ_{2}^{\times}$ of one of the three maximal closed subgroups of $\ZZ_{2}^{\times}$.
\end{proof}

Our goal is to show that a ``randomly'' chosen two generator subgroup of $\GL_{2}(\ZZ_{2})$ with surjective determinant is minimal. To quantify this, recall that $\GL_{2}(\ZZ_{2})$ has a Haar measure which (because $\GL_{2}(\ZZ_{2})$ is compact) is both left and right invariant. The main question we must answer is the following. 

\begin{question}
If we randomly pick $A,B\in \GL_2(\ZZ_2)$, what is the probability that the topological closure of $\langle A,B \rangle$ is a minimal group?
\end{question}

For a finite set $S \subseteq \GL_{2}(\Z_{2})$, we write $\langle S \rangle$ for the closure of the subgroup generated by $S$.

\begin{thm}
\label{mainthm}
The set of pairs $(A,B) \in \GL_{2}(\Z_{2})^{2}$ for which
$\langle A, B \rangle$ has infinite index in $\GL_{2}(\Z_{2})$ has measure zero.
\end{thm}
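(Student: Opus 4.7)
The plan is to exploit the Lie theory of $2$-adic analytic groups. Let $U \leq \GL_2(\Z_2)$ be a uniform pro-$2$ open subgroup—for instance, the principal congruence subgroup $\ker(\GL_2(\Z_2) \to \GL_2(\Z/4\Z))$—so that $\log : U \to L(U)$ is a homeomorphism onto a closed $\Z_2$-Lie subalgebra $L(U) \subseteq M_2(\Q_2)$ (see \cite{BookProP}). Under $\exp$ and $\log$, closed subgroups of $U$ correspond bijectively to closed $\Z_2$-Lie subalgebras of $L(U)$, and by Baker--Campbell--Hausdorff the closed subgroup $\langle h_1, h_2 \rangle \subseteq U$ corresponds to the closed $\Z_2$-Lie subalgebra generated by $\log h_1$ and $\log h_2$. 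A closed subgroup has finite index iff its associated subalgebra spans $M_2(\Q_2)$ over $\Q_2$. Set $N = [\GL_2(\Z_2) : U] < \infty$, so $A^N, B^N \in U$ for every $A, B \in \GL_2(\Z_2)$.

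Since $\langle A, B \rangle \supseteq \langle A^N, B^N \rangle$, it suffices to show that for almost every pair $(A, B)$, the $\Q_2$-Lie subalgebra of $M_2(\Q_2)$ generated by $x := \log(A^N)$ and $y := \log(B^N)$ is all of $M_2(\Q_2)$. Decomposing $x = x_0 + \tfrac{1}{2}\mathrm{tr}(x)\, I$ with $x_0 \in \mathfrak{sl}_2(\Q_2)$, and similarly for $y$, every iterated bracket lies in $\mathfrak{sl}_2$, so the generated Lie algebra is $\mathrm{span}_{\Q_2}\{x, y\}$ together with the Lie subalgebra of $\mathfrak{sl}_2(\Q_2)$ generated by $x_0$ and $y_0$. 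An elementary check shows this equals $M_2(\Q_2)$ if and only if (i) $x_0, y_0, [x_0, y_0]$ are linearly independent in $\mathfrak{sl}_2(\Q_2)$ (so they generate $\mathfrak{sl}_2$), and (ii) $\mathrm{tr}(x) \neq 0$ or $\mathrm{tr}(y) \neq 0$ (so $I$ lies in the Lie algebra).

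Both conditions are polynomial non-vanishing conditions on the entries of $(x, y) \in M_2(\Z_2)^2$. Pulling them back through the $2$-adic analytic map $(A, B) \mapsto (\log A^N, \log B^N)$ on $\GL_2(\Z_2)^2$ yields a $2$-adic analytic function $f(A, B)$ whose vanishing locus contains the bad set. An explicit choice of $(A_0, B_0)$—for example, $A_0$ a diagonal matrix near $I$ with distinct eigenvalues and $B_0$ a suitable unipotent element—verifies that $f$ does not vanish identically. The result then follows from the standard fact that the zero set of a nonzero $2$-adic analytic function on a compact $2$-adic analytic manifold has Haar measure zero (proved by induction on dimension using Weierstrass preparation).

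The main obstacle I anticipate is carrying out the Lie-theoretic setup cleanly at $p = 2$, where uniform pro-$p$ subgroups require passage to the kernel of reduction modulo $4$ rather than modulo $p$. Specifically, care is needed to ensure that the correspondence between closed subgroups and closed Lie subalgebras handles topological generation faithfully, and that the pulled-back condition on $(A, B)$ is genuinely $2$-adic analytic on all of $\GL_2(\Z_2)^2$ rather than merely on a neighborhood of the identity.
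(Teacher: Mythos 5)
Your overall strategy is the same as the paper's: pass to a uniform open subgroup via $A \mapsto A^{N}$, use the logarithm to translate finite index of $\langle A^{N}, B^{N} \rangle$ into the statement that the Lie algebra generated by $\log(A^{N})$ and $\log(B^{N})$ is all of $\mathfrak{gl}_{2}(\Q_{2})$, express this as non-vanishing of a $2$-adic analytic function, and invoke the fact that the zero set of a non-trivial power series has measure zero. The paper packages the Lie-algebra condition as $\det M \neq 0$ for a $4 \times 4$ matrix of iterated brackets rather than your split into the $\mathfrak{sl}_{2}$-part plus the trace condition, but that difference is cosmetic.

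The gap is in your last step, and it is exactly the issue you flag but do not resolve. The map $(A,B) \mapsto (\log A^{N}, \log B^{N})$ is not given by a single convergent power series on all of $\GL_{2}(\Z_{2})^{2}$; it is analytic only disc-by-disc, i.e., separately on each of the $96^{2}$ residue classes modulo $4$. On a totally disconnected space the ``standard fact'' you quote is false as stated: a locally constant function is analytic and can be nonzero while vanishing on a set of positive measure. Hence exhibiting one pair $(A_{0},B_{0})$ with $f(A_{0},B_{0}) \neq 0$ only shows $f$ is not identically zero on the single residue disc containing that pair; you must produce a non-vanishing witness in every residue disc mod $4$ (equivalently, show that every $(\overline{A},\overline{B}) \in \GL_{2}(\Z/4\Z)^{2}$ lifts to a pair generating a finite-index subgroup). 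This is precisely why the paper performs $96^{2}$ separate computational checks. A second, smaller error: your proposed witness---$A_{0}$ diagonal with distinct eigenvalues and $B_{0}$ unipotent---fails your own condition (i): if $x_{0} = \mathrm{diag}(t,-t)$ and $y_{0}$ is strictly upper triangular, then $[x_{0},y_{0}] = 2t\,y_{0}$, so $x_{0}, y_{0}, [x_{0},y_{0}]$ span only a Borel subalgebra of $\mathfrak{sl}_{2}$; you need $B_{0}$ with both off-diagonal entries nonzero.
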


\begin{lemma}
Suppose that 
\[
f(x_{1},x_{2},\ldots,x_{n}) = \sum_{\mathbf{s} \in \Z_{\geq 0}^{n}} a_{\mathbf{s}} x^{\mathbf{s}} 
\]
is a multivariable power series that converges on an open subset $D \subseteq \Z_{2}^{n}$
with the property that not all $a_{\mathbf{s}}$ are equal to zero.
Then the measure of
\[
\{ (x_{1},x_{2},\ldots,x_{n}) \in D : f(x_{1},x_{2},\ldots,x_{n}) = 0\}
\]
is zero with respect to natural $p$-adic Haar measure.
\end{lemma}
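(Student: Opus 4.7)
The plan is to proceed by induction on $n$, reducing to the one-variable case via Fubini's theorem. The base case $n=1$ rests on the standard fact that a nonzero convergent $p$-adic power series has only finitely many zeros on any closed disk inside its region of convergence: by the Weierstrass preparation theorem, on such a disk $f$ factors as $p^m u(x) P(x)$ where $u$ is a unit in the Tate algebra and $P$ is a distinguished polynomial whose degree is the number of zeros. Covering the open set $D \subseteq \Z_p$ by countably many compact sub-disks shows the zero set is at most countable, hence has Haar measure zero.

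For the inductive step, I would regroup the power series by powers of $x_n$,
\[
f(x_1,\ldots,x_n) = \sum_{i \geq 0} g_i(x_1,\ldots,x_{n-1}) \, x_n^i,
\]
so that each $g_i$ is a convergent power series on the projection $D' \subseteq \Z_p^{n-1}$ of $D$. Because $f$ is not identically zero, at least one $g_{i_0}$ is a nonzero power series, and the induction hypothesis applied to $g_{i_0}$ gives that the set $E = \{x' \in D' : g_{i_0}(x') = 0\}$ has $(n{-}1)$-dimensional measure zero. For any $x' \in D' \setminus E$, the one-variable power series $x_n \mapsto f(x', x_n)$ is nonzero (its $i_0$-th coefficient is $g_{i_0}(x') \neq 0$), so by the base case its zero set in the slice $D_{x'} = \{x_n : (x',x_n) \in D\}$ is of measure zero.

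To conclude, the zero set $\{f = 0\}$ is closed (since convergent power series are continuous on their domain of convergence), hence Borel measurable, and Fubini's theorem applied to its indicator function yields
\[
\mu_n(\{f = 0\}) \;=\; \int_{D'} \mu_1\bigl(\{ x_n \in D_{x'} : f(x',x_n) = 0 \}\bigr)\, d\mu_{n-1}(x') \;=\; 0,
\]
since the integrand vanishes off the measure-zero set $E$.

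The main subtlety I expect to have to address is that $f$ is assumed to converge only on an open subset $D$ rather than on a full polydisk, so Weierstrass preparation is not directly available on $D$. This is handled by covering $D$ with countably many compact polydisks on which the coefficients satisfy the uniform decay that makes the classical one-variable theory apply, and running the argument disk by disk. Beyond that, the combinatorics are routine: one needs only that Haar measure on $\Z_p^n$ is the product of Haar measures on $\Z_p$ (so Fubini applies without fuss) and that the exceptional set $E$ arising from the induction hypothesis is genuinely measurable, which it is because it is closed.
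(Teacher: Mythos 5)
Your proof is correct and follows essentially the same route as the paper's: induction on $n$, reduction to the one-variable case via the isolated-zeros property of $p$-adic analytic functions, and Fubini applied to the indicator of the zero set. The only difference is organizational --- you argue directly by applying the inductive hypothesis to a single nonzero coefficient $g_{i_0}$, whereas the paper runs the same Fubini argument as a contradiction showing all coefficients would vanish --- which is a contrapositive restatement rather than a genuinely different method.
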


\begin{proof}\footnote{The proof of this lemma follows the argument in the MSE post \href{https://math.stackexchange.com/questions/3216833/holomorphic-function-on-mathbbcn-vanishing-on-a-positive-lebesgue-measure}{here},
which proves the result in $\C^{n}$.}
We prove this by induction on $n$. Let $\mu_{n}$ denote the usual measure on $\Z_{2}^{n}$.

For $n = 1$, the power series $f(x_{1})$ is a $p$-adic analytic function
on $D$, and it is well-known (see for example \cite[Section 6.2]{Robert})
that the zeros of an analytic function are isolated. From this it follows that $f(x_{1})$ has finitely many zeros in $\Z_{2}$ and a finite set has measure zero.\footnote{Strassmann's theorem gives a more explicit upper bound on the number of zeros of a $p$-adic power series.}

Now assume that the result is true for power series in $n-1$ variables and write
\[
f(x_{1},x_{2},\ldots,x_{n}) = \sum_{j=0}^{\infty} a_{j}(x_{1},\ldots,x_{n-1}) x_{n}^{j}.
\]
Let $E$ be the zero set of $f$ and let $\chi_{E}$ be the characteristic function of $E$. By the Fubini theorem, we have
\[
\mu_{n}(E) = \int_{\Z_{2}^{n-1}} \int_{\Z_{2}} \chi_{E}(\vec{x},x_{n}) \, dx_{n} \, d\vec{x}.
\]
If we assume that $\mu_{n}(E) > 0$, the integrand must be positive on a set of positive measure.
In particular, there is some $F \subseteq \Z_{2}^{n-1}$ so that $\mu_{n-1}(F) > 0$ and
for all $\vec{x} \in F$,
\[
\mu_{1}(\{ x_{n} : (\vec{x},x_{n}) \in E \}) > 0.
\]
So if $\vec{x} \in F$, there is a positive measure set of $x_{n}$ for which
\[
\sum_{j=0}^{\infty} a_{j}(\vec{x}) x_{n}^{j}
\]
vanishes. By the one-variable case, it follows that the one-variable power series
$\sum a_{j}(\vec{x}) x_{n}^{j}$ vanishes, and therefore
$a_{j}(\vec{x}) = 0$. This shows that each $a_{j}$ vanishes on a set of positive measure
and the $n-1$ variable case shows that $a_{j} = 0$ for all $j$. This shows that all the coefficients of $f$
are equal to zero and so $f = 0$.
\end{proof}

Next, we need to apply ideas from $p$-adic Lie theory. For an introduction to this material, see \cite{BookProP}.
Let $\Gamma_{2} = \left\{ M \in \GL_{2}(\Z_{2}) : M \equiv I \pmod{4} \right\}$. Suppose that $A, B \in \Gamma_{2}$ and $G = \langle A, B \rangle$. Because $\Gamma_{2}$ is a uniform $2$-group, the group $\Gamma_{2}$ can be given the structure of a $\Z_{2}$-Lie algebra via the operations
\[
x + y = \lim_{n \to \infty} (x^{p^{n}} y^{p^{n}})^{p^{-n}}
\]
and the Lie bracket
\[
(x,y) = \lim_{n \to \infty} [x^{p^{n}},y^{p^{n}}]^{p^{-2n}}
\]
(where $[a,b] = aba^{-1} b^{-1}$).

Let $\mathfrak{gl}_{2}(R)$ denote the usual Lie algebra of $2 \times 2$ matrices with entries in $R$, where
 the Lie bracket is given via $[X,Y] = XY - YX$. Theorem 7.13 of \cite{BookProP} shows that the logarithm map sending $G \to \log(G) \subseteq \mathfrak{gl}_{2}(4 \Z_{2})$ is a Lie algebra isomorphism.

If $\log(G)$ has rank $4$ as a $\Z_{2}$-module, then there is a positive integer $k \geq 2$ so that
$\log(G)$ contains any matrix $\equiv 0 \pmod{2^{k}}$. If $X \equiv 0 \pmod{2^{k}}$, then
$\log(I+X) \equiv 0 \pmod{2^{k}}$. 
Hence $\log(I+X) \in \log(G)$ and thus
$I+X = \exp(\log(I+X)) \in G$. This shows that $G$ has finite index.

\begin{proof}[Proof of Theorem~\ref{mainthm}]
Fix two matrices $\overline{A}$ and $\overline{B}$ in $\GL_{2}(\Z/4\Z)$ and let
\begin{align*}
A &= \overline{A} + \begin{bmatrix} 4a_{1} & 4a_{2} \\ 4a_{3} & 4a_{4} \end{bmatrix}\\
B &= \overline{B} + \begin{bmatrix} 4b_{1} & 4b_{2} \\ 4b_{3} & 4b_{4} \end{bmatrix}.
\end{align*}
Because the exponent of $\GL_{2}(\Z/4\Z)$ is $12$, we have that $\overline{A}^{12} \equiv \overline{B}^{12} \equiv I \pmod{4}$.

Let $M$ be the $4 \times 4$ matrix whose columns consist of the entries of
\[
\log(A^{12}), \log(B^{12}), [\log(A^{12}),\log(B^{12})], [[\log(A^{12}),\log(B^{12})],\log(A^{12})],
\]
and let $d = \det(M)$. By the discussion above, if $d \ne 0$, then the four matrices above,
which are all in $\log(\langle A^{12}, B^{12} \rangle)$, are linearly independent over $\Z_{2}$ which
implies that $\log(\langle A^{12}, B^{12} \rangle)$ is a free $\Z_{2}$-module of rank $4$, and this implies that $\langle A^{12}, B^{12} \rangle$ has finite index in $\GL_{2}(\Z_{2})$ (which implies that $\langle A, B \rangle$ also has finite index).

This $d$ is a power series in the $8$ variables $a_{1}, a_{2}, a_{3}, a_{4}, b_{1}, b_{2}, b_{3}, b_{4}$ which only depends on $\overline{A}$ and $\overline{B}$. Moreover, because
$A^{12} \equiv I \pmod{4}$ and $B^{12} \equiv I \pmod{4}$, $A^{12} - I$ is a polynomial
in $4a_{1}$, $4a_{2}$, $4a_{3}$ and $4a_{4}$ and the formula for the logarithm implies that
$\log(A^{12})$ and $\log(B^{12})$ are power series in $\{ 4a_{1}, 4a_{2}, 4a_{3}, 4a_{4} \}$
and $\{ 4b_{1}, 4b_{2}, 4b_{3}, 4b_{4} \}$ respectively. This ensures that $d$ converges on all of
$\Z_{2}^{8}$.

The result will follow if we can show that for each pair $(\overline{A},\overline{B}) \in \GL_{2}(\Z/4\Z)^{2}$, the power series $d$ is nonzero. It suffices to show that this power series $d$ has at least one nonzero specialization. We check this computationally by randomly choosing the $a_{1}, \ldots, b_{4} \in \{ 1, 2, 3 \}$ and computing $d$ and checking if it is nonzero. In all $96^{2}$ cases, we find a case where $d \not\equiv 0 \pmod{2^{50}}$. This completes the proof.
\end{proof}

\begin{proof}[Proof of Theorem~\ref{thm:lots}]
Suppose that $H \subseteq \GL_{2}(\ZZ_{2})$ is a given subgroup with $\det(H) = \ZZ_{2}^{\times}$. If $A \in \GL_{2}(\ZZ_{2})$ is a randomly chosen matrix with $\det(A) \equiv 3 \pmod{8}$ and $B \in \GL_{2}(\ZZ_{2})$ is a randomly chosen matrix with $\det(B) \equiv 5 \pmod{8}$, let $M = \langle A, B \rangle$. This is a subgroup of $\GL_{2}(\ZZ_{2})$ with $\det(M) = \ZZ_{2}^{\times}$ and precisely three maximal closed subgroups. With probability $1$, $M$ has finite-index in $\ZZ_{2}^{\times}$, and with probability $\frac{1}{|\GL_{2}(\ZZ_{2}) : H|^{2}}$, we have $M \leq H$. So there is a positive probability that $M$ is a minimal subgroup of $H$, and therefore minimal subgroups of $H$ exist. 
\end{proof}

\section{Genus 0 Examples}\label{sec:Gen0}

\subsection{The search for genus 0 groups}

We now turn our attention to finding all of the genus 0 minimal subgroups of $\GL_{2}(\ZZ_{2})$ up to conjugacy. We focus on genus zero curves because they supply infinitely many examples of elliptic curves whose 2-adic representations are minimal. Our first challenge is to find a finite box that contains all of the genus 0 minimal groups. According to \cite{CPData,CP}, the largest index of a subgroup of $\PSL_2(\ZZ_2)$ with genus $0$ is $48$. If $G \subseteq \GL_{2}(\ZZ_{2})$ is a minimal group with $|\GL_{2}(\ZZ_{2}) : G| = d$, then surjectivity of the determinant gives that $|\SL_{2}(\ZZ_{2}) : G \cap \SL_{2}(\ZZ_{2})| = d$ and this implies that the image of $G \cap \SL_{2}(\ZZ_{2})$ in $\PSL_{2}(\ZZ_{2})$ has index $d$ or $d/2$. It follows that $d \leq 96$.

Next, if $P$ is a Sylow $2$-subgroup of $\GL_{2}(\ZZ_{2})$, then $|\GL_{2}(\ZZ_{2}) : P| = 3$. Each maximal closed subgroup of $P$ has level at most $8$ and index $6$. If $H \subseteq \GL_{2}(\ZZ_{2})$ has level
$2^{k}$ with $k \geq 2$, then every maximal subgroup of $H$ has level dividing $2^{k+1}$ by Lemma 3.3 of \cite{RZB}. Hence $|GL_{2}(\ZZ_{2}) : G| \leq 96$ implies that the level of $G$ is at most $128$.

Searching for subgroups of $\GL_2(\ZZ/128\ZZ)$ yields 7652 minimal groups, of which 28 have genus 0. Eight of these were already listed in Section~\ref{sec:intro}. For the other 20 minimal groups $G$, the modular curve $X_{G}$ is a conic without rational points. None of these $28$ genus $0$ minimal groups contain $-I$. In fact, none of the minimal groups of level $\leq 128$ contain $-I$. We conjecture that it is impossible for a minimal subgroup of $\GL_{2}(\ZZ_{2})$ to contain $-I$.

\subsection{Models for the corresponding modular curves}

The models for the 28 genus 0 modular curves corresponding to elliptic curves with minimal 2-adic image were computed using a combination of techniques. The 8 modular curves that were found using pure thought were computed in \cite{RZB} and we simplified the models.  The remaining 20 models were computed by using the techniques in \cite{rakvi} to compute covering models for modular curves associated to supergroups of these groups that contain $-I$. We then use this information together with information from \cite{RZB} and compute models for these curves as fiber products of known modular curves.

Returning to the 8 minimal groups $G$ that we found using pure thought in Section \ref{sec:intro}, if $E_{1}$ is the universal elliptic curve over $X_{G}$, then 3 other universal elliptic curves can be obtained by taking of twists of $E_{1}$ by $-1$, $2$, and $-2$. 
We remark here that we are guaranteed that if $E$ is an elliptic curve whose 2-adic image is minimal, then the twists of $E$ by $-1$, $2$ and $-2$ will also have minimal image because $\QQ(i)$, $\QQ(\sqrt{2})$ and $\QQ(\sqrt{-2})$ are the quadratic subfields of $\QQ(\zeta_{2^\infty})\subseteq \QQ(E[2^\infty]).$ 
The other 4 examples are $2$-isogenous to the curves obtained this way. It turns out that this relationship persists for most of the other genus $0$ minimal groups without points defined over $\QQ$. That relationship is made explicit below. 
The vertical maps are twists by 2, the horizontal maps are twists by $-1$, and the maps from front to back are $2$-isogenies. 

\[
\xymatrix{
& E_5 \ar@{<->}[rr]^{-1} \ar@{<-}[d] && E_6 \ar@{<->}[dd]^2 \\
E_1 \ar@{<->}[ru]^{2-isog} \ar@{<->}[rr]^(.6){-1} \ar@{<->}_2[dd] && E_2 \ar@{<->}[ru]^{2-isog} \ar@{<->}[dd]^(.66)2 & \\
& E_8 \ar@{<-}[u]^(.6)2\ar@{<-}[r]^(.6){-1} && E_7\ar@{<-}[l] \\ E_4 \ar@{<->}[ru]^{2-isog} \ar@{<->}[rr]_(.6){-1} && E_3 \ar@{<->}[ru]_{2-isog} &
}
\]

\medskip

\[
\begin{tabular}{|l||c|c|c|}\hline
$E_1$ & \texttt{16.48.0.25} & \texttt{32.96.0.2} & \texttt{32.96.0.1} \\\hline
$E_2$ & \texttt{16.48.0.26} & \texttt{32.96.0.4} & \texttt{32.96.0.3} \\\hline
$E_3$ & \texttt{16.48.0.82} & \texttt{32.96.0.26} & \texttt{32.96.0.25} \\\hline
$E_4$ & \texttt{16.48.0.83} & \texttt{32.96.0.28} & \texttt{32.96.0.27} \\\hline
$E_5$ & \texttt{16.48.0.238} & \texttt{32.96.0.105} & \texttt{32.96.0.108} \\\hline
$E_6$ & \texttt{16.48.0.239} & \texttt{32.96.0.107} & \texttt{32.96.0.106} \\\hline
$E_7$ & \texttt{16.48.0.234} & \texttt{32.96.0.101} & \texttt{32.96.0.104} \\\hline
$E_8$ & \texttt{16.48.0.235} & \texttt{32.96.0.103} & \texttt{32.96.0.102} \\\hline
\end{tabular}
\]

\medskip

The remaining 4 cases have RSZB label \texttt{8.24.0.44}, \texttt{8.24.0.86}, \texttt{8.24.0.123}, and \texttt{8.24.0.125}. If $X_{G}$ is one of these modular curves, there is a universal elliptic curve $E_{1}$ parametrizing elliptic curves with $2$-adic image contained in $G$. These elliptic curves are defined over
$F = \Q(X_{G})$, the function field of a pointless conic, and there is an automorphism $\varphi:F\to F$ with the property that $\varphi(E)\simeq E^{-1}$, where $E^{-1}$ is the quadratic twist of $E$ by $-1$.  The existence of this automorphism collapses the cube above down to a square. 
The data can be summarized as follows:

\[
\xymatrix{
E_1 \ar@{<->}[rr]^2 \ar@{<->}[dd]_{2-isog} & & E_2 \ar@{<->}[dd]^{2-isog}\\
& & \\
E_3 \ar@{<->}[rr]_2& & E_4 \\
}
\]

\[
\begin{tabular}{|l||c|}\hline
$E_1$ & \texttt{8.24.0.44} \\\hline
$E_2$ & \texttt{8.24.0.86} \\\hline
$E_3$ & \texttt{8.24.0.125} \\\hline
$E_4$ & \texttt{8.24.0.123} \\\hline
\end{tabular}
\]

For each of the families of elliptic curves we provide simplified models for $E_{1}$. 
The interested user can then take twists and isogenies to recover the remaining models. See Remark \ref{rmk:compute} for more details. 
For each curve we give a model for, we will define the base field as well as provide an $A$ and $B$ from the base field such that a generic elliptic curve with the corresponding image has the form
\[
y^2 =x^3+Ax^2+Bx.
\]
For three of the four families of curves, the base field they are defined over is the field of fractions of $\QQ[a,b]/(a^2+b^2+1)$ while the remaining curve is defined over $\QQ(t)$.

\begin{remark}
We pause of a moment and give a quick summary of what is happening here. What we know is that every elliptic curve (up to isomorphism) over a number field $K$ in which $-1$ is the sum of two squares and image in \texttt{16.48.0.25}, \texttt{39.96.0.2}, or \texttt{8.24.0.44} can be obtained by choosing $a$ and $b$ in $K$ with $a^2+b^2 = -1$ and plugging those $a$ and $b$ into our formulas. For \texttt{32.96.0.1}, every curve (up to isomorphism) defined over $\QQ$ with this image can be obtained by choosing a $t\in\QQ$ and plugging it into our formulas.
\end{remark}

\[
\begin{tabular}{|c|r|l|}\hline
\renewcommand{\arraystretch}{1.2}
\multirow{2}{*}{\texttt{16.48.0.25}} &A & \footnotesize{$2^2(b^2 - 2b - 1) (b^2 + 2b - 1) $}\\
&B &\footnotesize{$2^3(b^2 + 1)^2(b^2 + 2b - 1)^2 $}\\\hline

\multirow{2}{*}{\texttt{32.96.0.2}} &A &\footnotesize{$2^6(b^2 - 3)(b^4 - 22b^2 - 7)(b^8 + 116b^6 + 1462b^4 + 4372b^2 + 3281)a-$}\\ & & \footnotesize{$ 
2^2(b^8 - 108b^6 + 790b^4 + 116b^2 - 527)(b^8 + 116b^6 + 1462b^4 + 4372b^2 + 3281) $} \\
&B &\footnotesize{$ -2^7(b^2-3)^4(3b^{10} - 101b^8 - 850b^6 + 5126b^4 + 5983b^2 - 913)(b^{12} - 210b^{10} + 455b^8 +$}\\ & & \footnotesize{$\phantom{-2^7(b^2-3)^4(} 27236b^6+ 2879b^4 - 62834b^2 - 35047)a$}\\ & & \footnotesize{$ + 
2^3(b^2-3)^5(b^{22} - 993b^{20} + 80239b^{18} - 183591b^{16} - 25060758b^{14} - 46958090b^{12} +
$}\\ & &\footnotesize{$\phantom{+ 2^3(b^2-3)^5(} 1283004574b^{10} + 3556278098b^8 + 2155079365b^6 - $}\\
& &\phantom{$+b^2-3)5$} \footnotesize{$1522506117b^4 - 
1813927741b^2 - 391647931)$}\\\hline

\multirow{2}{*}{\texttt{8.24.0.44}} &A &\footnotesize{$2(b^2 + 1)(64b^4 - 16b^3 + 144b^2 - 16b + 79)(8b^2 + 7) $} \\
&B &\footnotesize{$ 2^4(b^2+1)^3(32768b^{10} - 20480b^9 + 245760b^8 - 92160b^7 + 660480b^6 - 153216b^5 + 
$}\\ & & \footnotesize{\phantom{$2^4(b^2+1)^3$}$833280b^4 - 111840b^3+ 504360b^2 - 30305b + 118568)$}\\\hline

\multirow{2}{*}{\texttt{32.96.0.1}} &A &\footnotesize{$-2^2(t^{16} - 120t^{14} + 1820t^{12} - 8008t^{10} + 12870t^8 - 8008t^6 + 1820t^4 - 120t^2 + 1) $ }\\
&B &\footnotesize{$2^3(t^2+1)^8(t^8 - 8t^7 - 28t^6 + 56t^5 + 70t^4 - 56t^3 - 28t^2 + 8t + 1)^2 $}\\\hline

\end{tabular}
\]

\begin{remark}\label{rmk:compute}
Each of the elliptic curves above has a unique rational 2-isogeny. The kernel of this isogeny is exactly $\{\mathcal{O},(0,0)\}.$ A classical result tells us that if $E$ is an elliptic curve of the form 
\[
y^2 = x^3+Ax^2+Bx,
\]
and $\varphi\colon E\to E'$ is the 2-isogeny with kernel $\{\mathcal{O},(0,0)\}$, then $E'$ is given by
\[
y^2 = x^3-2Ax^2+(A^2-4B)x.
\]
Using this and \cite[Proposition 5.4]{Silv1}, we can easily compute the models for the other elliptic curves in each family. 
\end{remark}

\section{Elliptic curves over imaginary quadratic fields with minimal image}\label{sec:ExQF}

In this final section, we give a family of elliptic curves defined over imaginary quadratic fields with minimal $2$-adic image and bad reduction only at $2$. 

\begin{prop}
Suppose that $n$ is a positive integer and let $a = \sqrt{-(2^{n}+1)}$. Let
\[
E : y^{2} = x^{3} + 2ax^{2} + (a^{2} + 1)x.
\]
Then $E$ has bad reduction only at prime ideals above $2$ in $\Q(a)$. 
If $n$ is odd and $n \ne 3$, then $\rho_{E,2^\infty}(G_{\Q(a)})$ is minimal and has RSZB label \texttt{8.24.0.86}. If $n = 2$, then $\rho_{E,2^\infty}(G_{\Q(a)})$ is minimal and has RSZB label \texttt{16.384.9.895}. If $n = 10$, then $\rho_{E,2^\infty}(G_{\Q(a)})$ is minimal and has RSZB label \texttt{16.384.9.894}.
\end{prop}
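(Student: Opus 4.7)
The plan breaks into three universal ingredients followed by a case-by-case identification of the image. First, a direct discriminant computation: for Weierstrass equations of the form $y^2 = x^3 + Ax^2 + Bx$ the discriminant is $B^2(A^2 - 4B)$, so here $\Delta = (a^2+1)^2 \cdot (4a^2 - 4(a^2+1)) = -4(a^2+1)^2$, and substituting $a^2+1 = -2^n$ yields $\Delta = -2^{2n+2}$. Hence $E$ has good reduction away from the primes above $2$ in $\Q(a)$. Second, $(0,0)$ is a $\Q(a)$-rational $2$-torsion point, which forces the mod-$2$ image into a Borel subgroup of $\GL_2(\FF_2) \cong S_3$ of order $2$, so the full $2$-adic image is a pro-$2$ group. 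Third, $\det \circ \rho_{E,2^\infty}$ is surjective onto $\ZZ_2^\times$ iff $\Q(a) = \Q(\sqrt{-(2^n+1)})$ is linearly disjoint from $\Q(\zeta_{2^\infty})$; the quadratic subfields of the latter are $\Q(i)$, $\Q(\sqrt 2)$, $\Q(\sqrt{-2})$. Sign and parity rule out the latter two, and $\Q(a) = \Q(i)$ would require $2^n+1 = m^2$, hence $2^n = (m-1)(m+1)$ with both factors powers of $2$; the only such powers are $2$ and $4$, forcing $n = 3$, which is excluded in every case of the proposition.

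To identify the image $G := \rho_{E,2^\infty}(G_{\Q(a)})$, I would compare $E$ against the parametrization of the minimal modular curve \texttt{8.24.0.86} tabulated in Section~\ref{sec:Gen0}, which is the twist-by-$2$ of the \texttt{8.24.0.44} family. Both are defined over the function field of the pointless conic $a^2 + b^2 + 1 = 0$, and setting $b^2 = 2^n$ satisfies this relation: for $n = 2, 10$ the value $b = 2^{n/2} \in \Q$ yields a genuine $\Q(a)$-point on the conic, while for $n$ odd the value $b = 2^{(n-1)/2}\sqrt{2}$ lies only in $\Q(a)(\sqrt{2})$. In the odd case I would show the resulting specialization (after the $A \mapsto 2A$, $B \mapsto 4B$ twist passing from \texttt{8.24.0.44} to \texttt{8.24.0.86}) descends to a $\Q(a)$-form isomorphic to our $E$, by an explicit Weierstrass change of variables absorbing the $\sqrt{2}$. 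This gives $G \subseteq \texttt{8.24.0.86}$, and equality follows because any proper open subgroup would cut out a quadratic subextension of $\Q(a)(E[8])/\Q(a)$ which Step~3 has ruled out. For $n = 2$ and $n = 10$ the rational specialization lands in a strictly smaller open subgroup of \texttt{8.24.0.86}, and running the open-image algorithm of \cite{RSZB} on one specific curve in each family identifies that subgroup as \texttt{16.384.9.895} or \texttt{16.384.9.894} respectively.

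Minimality of \texttt{8.24.0.86} is part of the genus-$0$ classification in Section~\ref{sec:Gen0}; for the two level-$16$ labels I would check minimality by the finite verification that every maximal closed subgroup has the form $\det^{-1}(N)$ for a maximal $N \leq \ZZ_2^\times$, equivalently that the image admits $(\ZZ/2\ZZ)^3$ as its largest elementary abelian $2$-quotient. The main obstacle is making the image-identification step uniform across all odd $n \ne 3$: a one-off computer algebra computation dispatches any single $n$, but uniformity requires exhibiting a closed-form $\Q(a)$-isomorphism between our explicit family and a descended twist of the \texttt{8.24.0.86} parametrization, tracking how the $\sqrt{2}$ introduced by $b = 2^{(n-1)/2}\sqrt{2}$ is cancelled when one applies the twist-by-$2$ and rescales. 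Once that closed-form isomorphism is in place, the identification of $G$ with \texttt{8.24.0.86} is automatic and uniform in $n$.
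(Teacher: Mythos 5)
Your global scaffolding---the discriminant computation, the rational $2$-torsion point, determinant surjectivity via the three quadratic subfields of $\Q(\zeta_{2^{\infty}})$ and the elementary argument that $2^{n}+1$ is a square only for $n=3$, and the principle that containment in a minimal group plus surjective determinant forces equality---coincides with the paper's. (Two small slips along the way: the discriminant of $y^{2}=x^{3}+Ax^{2}+Bx$ is $16B^{2}(A^{2}-4B)$, not $B^{2}(A^{2}-4B)$, which is harmless here; and a minimal group with surjective determinant has exactly \emph{three} maximal closed subgroups, so the relevant largest elementary abelian $2$-quotient is $(\ZZ/2\ZZ)^{2}$, not $(\ZZ/2\ZZ)^{3}$, which would have seven index-$2$ subgroups.)

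The genuine gap is the step you yourself flag as ``the main obstacle'': proving $\rho_{E,2^{\infty}}(G_{\Q(a)})\subseteq\texttt{8.24.0.86}$ uniformly for odd $n$. Your plan specializes the conic parametrization of the \texttt{8.24.0.44}/\texttt{8.24.0.86} family at $b=2^{(n-1)/2}\sqrt{2}$, but that is not a $\Q(a)$-point of the base (the tabulated $A$ and $B$ contain odd powers of $b$), so the specialization a priori lives over $\Q(a,\sqrt{2})$. For a group without $-I$ the moduli interpretation is sensitive to exactly this: containment of the image over $\Q(a)$ requires a $\Q(a)$-rational point of the base whose fiber is $E$ itself, not merely a curve isomorphic to $E$ over an extension. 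Exhibiting that descent is the entire content of the claim, and it is not routine bookkeeping, since the conic $a^{2}+b^{2}+1=0$ is pointless over $\Q$ and you must first locate which $\Q(a)$-points (if any) produce $E$. The paper avoids this entirely: $y^{2}=x^{3}+2tx^{2}+(t^{2}+1)x$ is itself the universal curve for \texttt{4.12.0.12}, and the criterion (read off from a model of \texttt{8.12.0.40}, the group generated by \texttt{8.24.0.86} and $-I$) for the image to drop into \texttt{8.24.0.86} is the single equation $t^{2}+1=-2u^{2}$ over the base field; with $t=a$ this reads $-2^{n}=-2u^{2}$, solved by $u=2^{(n-1)/2}\in\Q$ for every odd $n$, with no descent needed. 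I would replace your conic-specialization step with this twisting criterion. For $n=2,10$ your proposed direct image computation is a legitimate finite verification, but note the curves live over quadratic fields; the paper instead passes to the twist $E':y^{2}=x^{3}+2a^{2}x^{2}+a^{2}(a^{2}+1)x$, which is defined over $\Q$, reads off its image over $\Q$, and then identifies the correct index-$2$ subgroup over $\Q(a)$ using that $-I\notin\Im\rho_{E,2^{\infty}}$ (forced by containment in \texttt{4.12.0.12})---a route that only requires data already available for curves over $\Q$.
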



\begin{proof}
The discriminant $\Delta(E) = -64(a^{2}+1)^{2} = -64 (-2^{n})^{2} = -2^{2n+6}$ is a power of $2$, and therefore the only primes at which $E$ could have bad reduction are primes above $2$. This proves the first claim.

A computation with models of $2$-adic modular curves in \cite{RZB} shows that $y^{2} = x^{3} + 2tx^{2} + (t^{2}+1)x$ is the universal elliptic curve with $2$-adic image \texttt{4.12.0.12}. 
It follows that for any $n \geq 1$, the $2$-adic image is contained in \texttt{4.12.0.12}. Note that this level 4 subgroup does not contain $-I$ and hence $-I \not\in \rho_{E,2^\infty}(\Q(a))$ for any $n$.

Suppose that $n$ is odd. Note that $2^{n} + 1$ cannot be twice a square. Moreover, $2^{n} + 1$ is a square if and only if $n = 3$. 
This ensures that $\Q(a) \not\in \{ \Q(i), \Q(\sqrt{-2}), \Q(\sqrt{2}) \}$.

There are four maximal subgroups of \texttt{4.12.0.12} with surjective determinant, and they are
\texttt{4.24.0.9}, \texttt{4.24.0.10}, \texttt{8.24.0.85}, and \texttt{8.24.0.86}. 
Using a model for
\texttt{8.12.0.40} (generated by \texttt{8.24.0.86} and $-I$) we see that the $2$-adic image $y^{2} = x^{3} + 2tx^{2} + (t^{2} + 1)x$ is contained in \texttt{8.24.0.86} if and only if $t^{2} + 1 = -2u^{2}$. This certainly occurs if $t = a = \sqrt{-(2^{n} + 1)}$ with $n$ odd. 
Moreover, \texttt{8.24.0.86} is a minimal group, and so if $\rho_{E,2^\infty}(G_{\Q(a)})$ is contained in it and has surjective determinant, it must equal it.

Suppose now that $n = 2$. Let $E' : y^{2} = x^{3} + 2a^{2} x^{2} + a^{2} (a^{2} + 1)x$. 
Then $E' : y^{2} = x^{3} - 10x^{2} + 20x$ is a quadratic twist of $E$ (by an element of $\Q(a)$). 
We know from the LMFDB that $\rho_{E',2^\infty}(G_{\Q})$ is \texttt{16.96.3.338}. This group and all of its index $2$ subgroups contain $-I$. This implies that $\rho_{E',2^\infty}(G_{\Q(a)})$ contains $-I$,
but we know that $\rho_{E,2^\infty}(G_{\Q(a)})$ does not. This implies that $\rho_{E,2^\infty}(G_{\Q(a)})$ is an index
$2$ subgroup of $\rho_{E',2^\infty}(G_{\Q(a)})$, which in turn implies that $\rho_{E',2^\infty}(G_{\Q(a)})$ must be a proper subgroup of $\rho_{E',2^\infty}(G_{\Q})$. There are four index two subgroups with surjective determinant up to conjugacy, and only one of these is contained in \texttt{4.6.0.5} (the subgroup generated by \texttt{4.12.0.12} and $-I$). This subgroup is \texttt{16.192.9.211} and so this must be $\rho_{E',2^{\infty}}(G_{\Q(a)})$. 
The group \texttt{16.192.9.211} has several index $2$ subgroups that do not contain $-I$ and only one of these is contained in \texttt{4.12.0.12}. This subgroup has label \texttt{16.384.9.895}, and is minimal.

We apply the same process with $n = 10$. 
Let $E' : y^{2} = x^{3} + 2a^{2} x^{2} + a^{2} (a^{2} + 1)x$. 
Then $E' : y^{2} = x^{3} - 2050x^{2} + 1049600x$ is a quadratic twist of $E$ (by an element of $\QQ(a)$). 
Since $j(E') = 257^{3}/2^8$, one of the exceptional $j$-invariants from the $2$-adic classification,
it follows that $\rho_{E',2^\infty}(G_{\QQ})$ is \texttt{16.96.3.335}. Once again, this subgroup has no
index $2$ subgroups that do not contain $-I$, and therefore $\rho_{E',2}(G_{\Q(a)})$ must. However,
$\rho_{E,2^\infty}(G_{\Q(a)})$ does not, and this implies that $\rho_{E,2^\infty}(G_{\Q(a)})$ is an index two subgroup of
$\rho_{E',2^\infty}(G_{\Q(a)})$. There are four index two subgroups of \texttt{16.96.3.335} with surjective determinant up to conjugacy, and only one of these is contained in \texttt{4.6.0.5}. This subgroup is \texttt{16.192.9.208} and so this must be $\rho_{E',2^{\infty}}(G_{\Q(a)})$. 
The group \texttt{16.192.9.208} has two index $2$ subgroups without $-I$ up to conjugacy and the only one
which is contained in \texttt{4.12.0.12} is the minimal \texttt{16.384.9.894}, which must equal
$\rho_{E,2^\infty}(G_{\Q(a)})$.
\end{proof}

\bibliographystyle{plain} 
\bibliography{bib}

\begin{thebibliography}{10}

\bibitem{Magma}
Wieb Bosma, John Cannon, and Catherine Playoust.
\newblock The {M}agma algebra system. {I}. {T}he user language.
\newblock {\em J. Symbolic Comput.}, 24(3-4):235--265, 1997.
\newblock Computational algebra and number theory (London, 1993).

\bibitem{CPData}
C.~J. Cummins and S.~Pauli.
\newblock Data associate to congruence subgroups of {${\rm PSL}(2,{\Bbb Z})$}
  of genus less than or equal to 24.
\newblock \url{https://mathstats.uncg.edu/sites/pauli/congruence/}.
\newblock Accessed: 2024-01-01.

\bibitem{CP}
C.~J. Cummins and S.~Pauli.
\newblock Congruence subgroups of {${\rm PSL}(2,{\Bbb Z})$} of genus less than
  or equal to 24.
\newblock {\em Experiment. Math.}, 12(2):243--255, 2003.

\bibitem{DGJ}
Harris~B. Daniels and Enrique Gonz\'{a}lez-Jim\'{e}nez.
\newblock Serre's constant of elliptic curves over the rationals.
\newblock {\em Exp. Math.}, 31(2):518--536, 2022.

\bibitem{DLRM}
Harris~B. Daniels, \'{A}lvaro Lozano-Robledo, and Jackson~S. Morrow.
\newblock Towards a classification of entanglements of {G}alois representations
  attached to elliptic curves.
\newblock {\em Rev. Mat. Iberoam.}, 39(3):803--844, 2023.

\bibitem{DM}
Harris~B. Daniels and Jackson~S. Morrow.
\newblock A group theoretic perspective on entanglements of division fields.
\newblock {\em Trans. Amer. Math. Soc. Ser. B}, 9:827--858, 2022.

\bibitem{github}
Harris~B. Daniels and Jeremy Rouse.
\newblock Code for {M}inimal subgroups of {GL}${_2(\mathbb{Z}_{S})}$.
\newblock \url{https://github.com/HDaniels432/Minimal-Groups}, 2024.
\newblock [Online; accessed 6 March 2024].

\bibitem{BookProP}
J.~D. Dixon, M.~P.~F. du~Sautoy, A.~Mann, and D.~Segal.
\newblock {\em Analytic pro-{$p$} groups}, volume~61 of {\em Cambridge Studies
  in Advanced Mathematics}.
\newblock Cambridge University Press, Cambridge, second edition, 1999.

\bibitem{DF}
David~S. Dummit and Richard~M. Foote.
\newblock {\em Abstract algebra}.
\newblock John Wiley \& Sons, Inc., Hoboken, NJ, third edition, 2004.

\bibitem{lmfdb}
The {LMFDB Collaboration}.
\newblock The {L}-functions and modular forms database.
\newblock \url{https://www.lmfdb.org}, 2023.
\newblock [Online; accessed 2 August 2023].

\bibitem{ALRBook1}
\'{A}lvaro Lozano-Robledo.
\newblock {\em Elliptic curves, modular forms, and their {$L$}-functions},
  volume~58 of {\em Student Mathematical Library}.
\newblock American Mathematical Society, Providence, RI; Institute for Advanced
  Study (IAS), Princeton, NJ, 2011.
\newblock IAS/Park City Mathematical Subseries.

\bibitem{MilneAV}
J.~S. Milne.
\newblock Abelian varieties.
\newblock In {\em Arithmetic geometry ({S}torrs, {C}onn., 1984)}, pages
  103--150. Springer, New York, 1986.

\bibitem{NS}
Nikolay Nikolov and Dan Segal.
\newblock Finite index subgroups in profinite groups.
\newblock {\em C. R. Math. Acad. Sci. Paris}, 337(5):303--308, 2003.

\bibitem{rakvi}
Rakvi.
\newblock A classification of genus 0 modular curves with rational points.
\newblock {\em Math. Comp.}, 93(348):1859--1902, 2024.

\bibitem{Robert}
Alain~M. Robert.
\newblock {\em A course in {$p$}-adic analysis}, volume 198 of {\em Graduate
  Texts in Mathematics}.
\newblock Springer-Verlag, New York, 2000.

\bibitem{RSZB}
Jeremy Rouse, Andrew~V. Sutherland, and David Zureick-Brown.
\newblock {$\ell$}-adic images of {G}alois for elliptic curves over {$\Bbb{Q}$}
  (and an appendix with {J}ohn {V}oight).
\newblock {\em Forum Math. Sigma}, 10:Paper No. e62, 63, 2022.
\newblock With an appendix with John Voight.

\bibitem{RZB}
Jeremy Rouse and David Zureick-Brown.
\newblock Elliptic curves over {$\Bbb Q$} and 2-adic images of {G}alois.
\newblock {\em Res. Number Theory}, 1:Paper No. 12, 34, 2015.

\bibitem{Serre68}
Jean-Pierre Serre.
\newblock {\em Abelian {$l$}-adic representations and elliptic curves}.
\newblock W. A. Benjamin, Inc., New York-Amsterdam, 1968.
\newblock McGill University lecture notes written with the collaboration of
  Willem Kuyk and John Labute.

\bibitem{SerreTateNOS}
Jean-Pierre Serre and John Tate.
\newblock Good reduction of abelian varieties.
\newblock {\em Ann. of Math. (2)}, 88:492--517, 1968.

\bibitem{Silv2}
Joseph~H. Silverman.
\newblock {\em Advanced topics in the arithmetic of elliptic curves}, volume
  151 of {\em Graduate Texts in Mathematics}.
\newblock Springer-Verlag, New York, 1994.

\bibitem{Silv1}
Joseph~H. Silverman.
\newblock {\em The arithmetic of elliptic curves}, volume 106 of {\em Graduate
  Texts in Mathematics}.
\newblock Springer, Dordrecht, second edition, 2009.

\bibitem{Tate94}
John Tate.
\newblock The non-existence of certain {G}alois extensions of {${\bf Q}$}
  unramified outside {$2$}.
\newblock In {\em Arithmetic geometry ({T}empe, {AZ}, 1993)}, volume 174 of
  {\em Contemp. Math.}, pages 153--156. Amer. Math. Soc., Providence, RI, 1994.

\bibitem{Zyw}
David Zywina.
\newblock On the possible images of the mod ell representations associated to
  elliptic curves over $\mathbb{\QQ}$.
\newblock {\em arXiv preprint arXiv:1508.07660}, 2015.

\bibitem{Zyw2}
David Zywina.
\newblock Explicit open images for elliptic curves over $\mathbb{Q}$.
\newblock {\em arXiv preprint arXiv:2206.14959}, 2022.

\end{thebibliography}

\end{document}